\let\ge=\geqslant
\let\le=\leqslant
\DeclareMathOperator*{\esssup}{ess\,sup}
\newcommand{\REM}[1]{\relax}
\theoremstyle{theorem}
\newtheorem{result}{Theorem}
\newtheorem{theorem}{Theorem}
\newtheorem{lemma}{Lemma}
\theoremstyle{definition}
\newtheorem{definition}{Definition}
\newtheorem{example}{Example}
\theoremstyle{remark}
\newtheorem{remark}{Remark}
\numberwithin{equation}{section}
\newcommand{\Hol}{{\sf Hol}}
\newcommand{\Poiss}{\mathcal P}
\newcommand{\UH}{\mathbb{H}}
\newcommand{\Real}{\mathbb{R}}
\newcommand{\Complex}{\mathbb{C}}
\newcommand{\ComplexE}{\overline{\mathbb{C}}}
\newcommand{\UD}{\mathbb{D}}
\newcommand{\UC}{{\partial\UD}}
\newcommand{\clS}{\mathcal{S}}
\newcommand{\Maponto}%{\stackrel{%\scriptscriptstyle
\newcommand{\Mapinto}
{\xrightarrow{\hbox{\lower.2ex\hbox{$\scriptstyle \smash{\mathsf{into}}$}}\,}}
\let\R=\Real
\let\C=\Complex
\newcommand{\proofbox}{\hfill$\Box$}
\newcommand{\STOP}{\par\hbox to\textwidth{\color{red}\leaders\hbox{\,STOP\,}\hfil}\par}
\newcommand{\mcite}[1]{\csname b@#1\endcsname}
\def\id{{\sf id}}
\def\Re{\mathop{\mathsf{Re}}}
\def\Im{\mathop{\mathtt{I}\hskip-.1ex\mathsf{m}}}
\newcommand{\di}{\mathrm{d}}
\newcommand{\SHOWCORRECTIONS}{%
\newcommand{\nv}[1]{{\color{green!60!black}##1}}%
\newcommand{\comm}[1]{{\color[rgb]{0.5,0,0.5}##1}}%
\newcommand{\dv}[1]{{\color[rgb]{0.65,0.74,0.79}\sout{##1}}}%
\newcommand{\IN}[1]{{\color[rgb]{1.00,0.33,0.33}##1}}
\newcommand{\ID}[1]{{\color[rgb]{0.65,0.55,0.62}\sout{##1}}}%
\newcommand{\IC}[1]{{\color[rgb]{0.00,0.00,1}##1}}%
\newcommand{\RD}[1]{\textcolor{red}{##1}}%
}%
\newcommand{\HIDECORRECTIONS}{%
\newcommand{\nv}[1]{##1}
\newcommand{\dv}[1]{\relax}%
\newcommand{\comm}[1]{\relax}%
\let\IN=\nv%
\let\ID=\dv%
\let\IC=\comm%
\newcommand{\RD}[1]{##1}%
}%
\let\defi=\textsl
\begin{document}

    %   ++++++++++++++++++++++++++++++++++++++++++++++++++++++
    %
    %       Article Information
    %
    %   ++++++++++++++++++++++++++++++++++++++++++++++++++++++
\newbox\AYu
\setbox\AYu=\hbox{\large\textit{in memory of ~Alexander\,Vasil'ev}}

\title[Q.c.-extensions, Loewner chains, and the $\lambda$-Lemma]%
{Quasiconformal extensions, Loewner chains,\\ and the $\lambda$-Lemma\\[2.5ex] \hbox to\textwidth{\hss\usebox\AYu\hskip1.5em}
\vskip1.5ex\mbox{~}}
%\subtitle{\textit{in memory of Alexander Vasil'ev}}

%\titlerunning{Quasiconformal extensions, Loewner chains, and $\lambda$-Lemma}

\REM{ \institute{P. Gumenyuk \at
              Institutt for matematikk og naturvitenskap, Universitetet i Stavanger,\\ N-4036 Stavanger, Norway\\
              Tel.: +47-5183-19-11\\
              \email{pavel.gumenyuk@uis.no}
           \and
           I. Prause \at
              Department of Mathematics and Statistics, P.O. Box 68 (Gustaf H\"allstr\"omin katu~2b), University of Helsinki FI-00014, Finland\\
              Tel.: +358-2-941-51435\\
              \email{istvan.prause@helsinki.fi} }}

\author[P. Gumenyuk]{Pavel Gumenyuk\,${}^\dag$}
\address{Institutt for matematikk og naturvitenskap, Universitetet i Stavanger, N-4036 Stavanger, Norway}
\email{pavel.gumenyuk@uis.no}
\thanks{${}^\dag$ Partially supported by {\it Ministerio de Econom\'\i{}a y Competitividad} (Spain)
project \hbox{MTM2015-63699-P}.}

\author[I. Prause]{Istv\'an Prause\,$^\ddag$}
\address{Department of Mathematics and Statistics, P.O. Box 68 (Gustaf H\"allstr\"omin katu 2b),
University of Helsinki FI-00014, Finland} \email{istvan.prause@helsinki.fi}
\thanks{$^\ddag$ Supported by Academy of Finland grants 1266182 and 1303765.}

%\subjclass[2010]{Primary 30C62, Secondary 30C35, 30D05}
%\keywords{univalent function, quasiconformal extension, Loewner chain, chordal Loewner equation, evolution family, Loewner
%range}
\subjclass[2010]{Primary: 30C62; Secondary: 30C35, 30D05}

 \keywords{Quasiconformal extension, Loewner chain, Becker extension,  evolution family,
 Loewner\,--\,Kufarev equation, Loewner range}

    %   ++++++++++++++++++++++++++++++++++++++++++++++++++++++
    %
    %       Abstract
    %
    %   ++++++++++++++++++++++++++++++++++++++++++++++++++++++

\maketitle

\begin{abstract}
In 1972, J.~Becker [J. Reine Angew. Math. \textbf{255}] discovered a sufficient condition for quasiconformal extendibility of Loewner
chains. Many known conditions for quasiconformal extendibility of holomorphic functions in the unit disk can be deduced from his result.
We give a new proof of (a~generalization of) Becker's result based on Slodkowski's Extended $\lambda$-Lemma. Moreover, we
characterize all quasiconformal extensions produced by  Becker's (classical) construction and use that to obtain examples in which
Becker's extension is extremal (i.e. optimal in the sense of maximal dilatation) or, on the contrary, fails to be extremal.
 %\subclass{Primary: 30C62; Secondary: 30C35 \and 30D05}
\end{abstract}

\tableofcontents

\section{Introduction}
The study of conformal mappings of the unit disk~$\UD:=\{z\colon |z|<1\}$ admitting quasiconformal extensions is a classical topic in
Geometric Function Theory closely related with the Teichm\"uller Theory, see, e.g.
\cite{AksentevS:2002,Krushkal:Handbook,Lehto:Book,Takhtajan}. One of the main results of Loewner Theory states that the
class~$\clS$ of all conformal mappings $f\colon\UD\to\Complex$ normalized by $f(0)=0$, $f'(0)=0$, coincides with the range of the
map that takes each function $p(z,t)$, $z\in\UD$, $t\ge0$, measurable in~$t$, holomorphic in~$z$ with $\Re p>0$, and normalized
by~$p(0,t)=1$, to $f(z):=\lim_{t\to+\infty}e^t w(z,t)$, where $w$ is the unique  solution to the Loewner\,--\,Kufarev ODE $\di w/\di
t=-wp(w,t)$, $t\ge0$, $w(z,0)=z\in\UD$. At a first glance, this representation of~$\clS$ seems to be too complicated. Nevertheless, it
proved to be a very efficient tool in many problems of Complex Analysis, e.g., in extremal problems for conformal mappings,
see~\cite{FMSAYu} and references therein.

Given a subclass $\tilde \clS\subset\clS$, a natural problem arise: find a set of functions~$p$ that generates~$\tilde\clS$. For the
subclass $\clS_k$, $k\in(0,1)$, formed by all $f\in\clS$ admitting \hbox{$k$-q.c.} extensions $F:\ComplexE\to\ComplexE$ with
$F(\infty)=\infty$, see Sect.\,\ref{S_pre} for precise defintions, a partial answer was given in~1972 by Becker~\cite{Becker:1972}
who discovered a quite explicit construction of a $k$-q.c. extension of functions~$f\in\clS$ which are generated by~$p$ satisfying $
p(\UD,t)\subset U_k:=\big\{\zeta\colon|\zeta-1|\le k|\zeta+1|\big\}$ for a.e.~$t\ge0$. There are many indications that the
class~$\clS_k^B$ generated by such $p$'s does not coincide with~$\clS_k$. In Sect.\,\ref{S_BeckExt}, we characterize all
q.c.-extensions arising from Becker's construction, see Theorem~\ref{TH_Becker-converse}. As a corollary, we are able to prove
rigorously that~$\clS_k^B\neq \clS_k$, see Theorem~\ref{TH_NO} in Sect.\,\ref{S_NO}. Although Becker's condition is only sufficient
for $k$-q.c. extendibility, it seems to be worth for thorough investigation. Many well-known explicit sufficient conditions can be
deduced from Becker's result, see, e.g. \cite[\S\S5.3-5.4]{Becker:1980}, \cite{Hotta::Explicit,Hotta::ComplCoeff}. In fact, for certain
$f\in\clS_k$, Becker's extension has smallest maximal dilatation  among all q.c.-extensions of~$f$, see, e.g. Examples~\ref{EX_extrI}
and~\ref{EX_extrII} in Sect.\,\ref{S_NO}.
 Moreover, in~\cite{HottaGum::QC-chordal}, Becker's construction has been extended to the so-called chordal analogue  of the
Loewner\,--\,Kufarev equation~\cite[\S8]{BracciCD:evolutionI}. A bit surprisingly, but Becker's condition appears to be sufficient for
the $k$-q.c. extendibility in a much more general version of Loewner Theory developed
in~\cite{BracciCD:evolutionI,BracciCD:evolutionII,CDGloewner-chains,ABHK}. The proof of that fact has been recently obtained
in~\cite{Ikkei-pre}. In Sect.\,\ref{SS_withLambdaLemma}, we give a shorter proof based on the Extended $\lambda$-Lemma due
Slodkowski~\cite{Slodkowski:1991}.

In the next section, we give necessary preliminaries from quasiconformal mappings and Loewner Theory. Our main results are stated
and proved in Sect.\,\ref{SS_withLambdaLemma}\,--\,\ref{S_NO}. The paper is concluded with a brief discussion in
Sect.\,\ref{S_L-range} on an auxiliary question concerning the Loewner range, which constitutes also some independent interest for
Loewner Theory.

\section{Preliminaries}\label{S_pre}
There are several ways to define quasiconformality. One of the equivalent definitions is as follows.
\begin{definition}\label{DF_qc}
Let $k\in[0,1)$.  By a \textsl{$k$-quasiconformal map} (in short, \textsl{$k$-q.c. map}, or simply \textsl{q.c.-map} if we do not have
to specify~$k$) of a domain $U\subset\ComplexE$ we mean a homeomorphism $F:U\to\ComplexE$ in the Sobolev
class~$W^{1,2}_{\mathrm{loc}}$ such that $|\bar\partial F|\le k|\partial F|$ for a.e.~$z\in U$, where as usual $
\bar\partial:=\tfrac12(\tfrac\partial{\partial x}+i\tfrac\partial{\partial y})~\text{and}~
\partial:=\tfrac12(\tfrac\partial{\partial x}-i\tfrac\partial{\partial y}).
$
\end{definition}
Every $k$-q.c. map $F$ is a solution to the Beltrami equation
\begin{equation}\label{EQ_Beltrami}
\bar \partial F(z)=\mu_F(z)\,\partial F(z) \qquad \text{for a.e.~$z$,}
\end{equation}
where $\mu_F$ is a complex-valued measurable function satisfying $\esssup|\mu_F|\le k$. The map $F$ is conformal if $\mu_F$
vanishes almost everywhere.
\begin{definition}
The function~$\mu_F$ in~\eqref{EQ_Beltrami} is called the \textsl{Beltrami coefficient} or \textsl{complex dilatation} of~$F$; and
$\esssup|\mu_F|$  is called the \textsl{maximal dilatation}~of~$F$.
\end{definition}
Note that in this paper we use the ``$k$-small notation''. Often instead of~$k$, the deviation from conformality is measure by the
parameter $K:=(1+k)/(1-k)$. For further details on quasiconformal mappings, see,
e.g.~\cite{Ahlfors:2006,PDE-qc,LehtoVirtanen:1973,Lehto:Book}.

Criteria for q.c.-extendibility of univalent maps $f:\UD\to\Complex$ is a classical topic in Geometric Function Theory, see, e.g.
\cite{AksentevS:2002,Krushkal:Handbook,Lehto:Book,Takhtajan}.
\begin{definition}
Let $k\in[0,1)$. A holomorphic map $f:\UD\to\Complex$ is said to be \textsl{$k$-q.c. extendible} to~$\Complex$, or
to~$\ComplexE$, if there exists a $k$-q.c. map $F:\Complex\to\Complex$, or respectively, ${F:\ComplexE\to\ComplexE}$ such that
$F|_\UD=f$.
\end{definition}
Thanks to the removability property, see, e.g. \cite[Chapter~I, \S8.1]{LehtoVirtanen:1973}, $f$ is \hbox{$k$-q.c.} extendible to
$\Complex$ if and only if it admits a $k$-q.c. extension $F:\ComplexE\to\ComplexE$ with $F(\infty)=\infty$.  We will denote by
$\clS_k$ the class of all $k$-q.c. extendible~to~$\Complex$ holomorphic
 functions ${f:\UD\to\Complex}$ normalized by $f(0)=f'(0)-1=0$.

\medskip
Below we collect necessary basics of Loewner Theory
following~\cite{BracciCD:evolutionI,BracciCD:evolutionII,CDGloewner-chains,ABHK}. In~1972, Becker~\cite{Becker:1972} obtained
an explicit construction of q.c.-extensions based on so-called \textsl{Loewner chains}.
\begin{definition}[\protect{\cite[Definition~1.2]{CDGloewner-chains}}]
A \textsl{Loewner chain} in the unit disk~$\UD$ is a family~$(f_t)_{t\ge0}\subset\Hol(\UD,\Complex)$ satisfying the following
conditions:
\begin{itemize}
\item[LC1.]  $f_t$ is univalent in~$\UD$ for any~$t\ge0$,
\item[LC2.] $f_s(\UD)\subset f_t(\UD)$ whenever $t\ge s\ge0$,
\item[LC3.] for any compact $K\subset\UD$ there exists a non-negative locally integrable function $k_K$ on $[0,+\infty)$ such that
$$
\max_K\big|f_t-f_s\big|\le\int_s^t\!k_K(\xi)\,\di\xi \qquad\text{whenever $t\ge s\ge0$.}
$$
\end{itemize}
\end{definition}
\noindent Any Loewner chain~$(f_t)$ solves, in the sense of~\cite[Definition~2.1]{CDGduality}, the Loewner\,--\,Kufarev PDE
\begin{equation}\label{EQ_genLK-PDE}
\frac{\partial f_t}{\partial t}=-f'_t(z)G(z,t),\quad z\in\UD,~t\ge0,
\end{equation}
where $G$ is some \textsl{Herglotz vector field}, defined by~$(f_t)$ uniquely up to a null-set on the~$t$-axis. According to
\cite[Theorem~4.8]{BracciCD:evolutionI}, one of the two equivalent definitions of Herglotz vector fields is as follows.
\begin{definition}
A \textsl{Herglotz function} in~$\UD$ is a map $p:\UD\times[0,+\infty)\to\Complex$ satisfying the following conditions:
\begin{itemize}
\item[HF1.] for each $z\in\UD$, $p(z,\cdot)$ is locally integrable on~$[0,+\infty)$, and
\item[HF2.]  for a.e.~$t\ge0$, $p(\cdot,t)$ is holomorphic in~$\UD$ and $\Re p(\cdot,t)\ge0$.
\end{itemize}
A \textsl{Herglotz vector field} in~$\UD$ is a map~$G:\UD\times[0,+\infty)\to\Complex$ of the form
\begin{equation}\label{EQ_BP}
G(z,t)=\big(\tau(t)-z\big)\big(1-\overline{\tau(t)}z\big)\,p(z,t)\quad\text{for all~$z\in\UD$ and a.e.~$t\ge0$},
\end{equation}
where $\tau:[0,+\infty)\to\overline\UD$ is a measurable function and $p$ is a Herglotz function.
\end{definition}
\begin{remark}
It is known~\cite[Theorem~4.8]{BracciCD:evolutionI} that the Herglotz function $p$ in representation~\eqref{EQ_BP} is uniquely
defined by~$G$ up to a null-set on the $t$-axis.
\end{remark}

Furthermore, it is known that the so-called \textsl{evolution family $(\varphi_{s,t})_{t\ge s\ge 0}$, $\varphi_{s,t}:=f_t^{-1}\circ
f_s$, associated with~$(f_t)$} is the unique solution to the following initial values problem for the (generalized) Loewner\,--\,Kufarev
ODE:
\begin{equation}\label{EQ_genLK-ODE}
\frac{\di\varphi_{s,t}(z)}{\di t}=G\big(\varphi_{s,t}(z),t\big),\qquad t\ge s\ge0,~z\in\UD;\quad \varphi_{s,s}(z)=z.
\end{equation}
\begin{remark}
In general, the right-hand side of~\eqref{EQ_genLK-ODE} is discontinuous in~$t$. The equation is to be understood as
Carath\'eodory's first order ODE; see, e.g. \hbox{\cite[\S2]{CDGannulusI}} and references therein for basic theory of such ODEs.
\end{remark}

Evolution families can be defined independently, with no relation to Loewner chains.
\begin{definition}[\protect{\cite[Definition~3.1]{BracciCD:evolutionI}}]
An \textsl{evolution family} in~$\UD$ is a two-parameter family $(\varphi_{s,t})_{t\ge s\ge 0}$ satisfying the following conditions:
\begin{itemize}
\item[EF1.] $\varphi_{s,s}=\id_{\mathbb{D}}$ for any~$s\ge0$,

\item[EF2.] $\varphi_{s,t}=\varphi_{u,t}\circ\varphi_{s,u}$ whenever $t\ge u\ge s\ge0$,

\item[EF3.] for each $z\in\mathbb{D}$ there exists a non-negative locally integrable function $k_{z,T}$ on $[0,+\infty)$ such that
\[
|\varphi_{s,u}(z)-\varphi_{s,t}(z)|\le\int_{u}^{t}\!k_{z,T}(\xi)\,\di\xi\qquad\text{whenever $t\ge u\ge s\ge0$.}
\]
\end{itemize}
\end{definition}

Equation~\eqref{EQ_genLK-ODE} establishes a one-to-one correspondence between evolution families $(\varphi_{s,t})$ and Herglotz
vector fields~$G$, see \cite[Theorem~1.1]{BracciCD:evolutionI}. Moreover, given $(\varphi_{s,t})$ or~$G$, one can reconstruct the
corresponding Loewner chain~$(f_t)$, which turns out to be unique up to the post-composition with a conformal map,
see~\cite[Theorems 1.3,~1.6]{CDGloewner-chains} and~\cite{ABHK}.

\begin{definition}
By a \textsl{radial Loewner chain} we mean a Loewner chain~$(f_t)$ satisfying $f_t(0)=f_0(0)$ for all~$t\ge0$.
\end{definition}
\begin{remark}
Clearly, a Loewner chain $(f_t)$ is radial if and only if $G(0,t)=0$ for a.e.~$t\ge0$, i.e. if $\tau(t)=0$ and hence $G(z,t)=-z p(z,t)$ for
a.e.~$t\ge0$.
\end{remark}

\section{Becker's condition and q.c.-extensions of evolution families and general Loewner chains}\label{SS_withLambdaLemma}

The classical Loewner Theory developed in~\cite{Loewner:1923,Kufarev:1943,Pom:1965,Gutljanski}, see
also~\hbox{\cite[\S6.1]{Pom:1965}}, deals with radial Loewner chains~$(f_t)$ whose Herglotz functions $p$ are normalized by
$p(0,t)=1$ for a.e.~$t\ge0$. This normalization implies that the \textsl{Loewner range} $\cup_{t\ge0}f_t(\UD)$ coincides with~$\C$
and hence the Loewner chain~$(f_t)$ corresponding to a given Herglotz function~$p$ is defined uniquely up to linear transformations.
Therefore, many properties of~$(f_t)$, such as quasiconformal extendibility, are determined by the properties of~$p$.

Following Becker~\cite{Becker:1976}, \cite[\S5.1]{Becker:1980}, we replace the normalization ${p(0,t)=1}$ by a weaker
condition~$\int_0^{+\infty}\Re p(0,t)\di t=+\infty$, which still guarantees that $\cup_{t\ge0}f_t(\UD)=\C$. Becker discovered the
following remarkable result.
\begin{result}[\protect{\cite{Becker:1972,Becker:1976}}]\label{TH_Becker}
Let $k\in[0,1)$ and let $(f_t)$ be a radial Loewner chain whose Herglotz function~$p$ satisfies
\begin{equation}\label{EQ_Becker-condition}
  p(\UD,t) \subset U(k):=\left\{ w \in \C \colon \left|\frac{w-1}{w+1}\right| \le k\right\}\quad\text{for  a.e.~$t\ge0$}.
\end{equation}
Then for every~$t\ge0$, the function~$f_t$ admits a $k$-q.c. extension to~$\ComplexE$ that fixes~$\infty$. In particular, such an
extension for~$f_0$ is given by
\begin{equation}\label{EQ_BeckerExt}
F(\rho e^{i\theta}):=\left\{
 \begin{array}{ll}
  f_0(\rho e^{i\theta}), & \text{if~$0\le \rho<1$},\\[1ex]
  f_{\log \rho}(e^{i\theta}), & \text{if~$\rho\ge1$}.
 \end{array}
\right.
\end{equation}
\end{result}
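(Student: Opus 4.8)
The plan is to analyze directly the explicit map $F$ in~\eqref{EQ_BeckerExt} and to verify that it is a $k$-q.c.\ homeomorphism of~$\ComplexE$ fixing~$\infty$; the assertion for a general~$f_t$ then follows by applying the $f_0$-case to the shifted radial Loewner chain $(f_{s+t})_{t\ge0}$, whose Herglotz function $(z,t)\mapsto p(z,s+t)$ again satisfies~\eqref{EQ_Becker-condition}. I would organize the argument around three checks: (i)~$F$ is a homeomorphism of~$\ComplexE$ with $F(\infty)=\infty$; (ii)~$F\in W^{1,2}_{\rm loc}$; and (iii)~$|\bar\partial F|\le k\,|\partial F|$ almost everywhere.

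For~(i), I would use the geometry of the chain. Writing $\Gamma_t:=f_t(\UC)$, conditions LC1--LC2 say that the Jordan curves $\Gamma_t$ are nested and bound the increasing family of Jordan domains $f_t(\UD)$, while the normalization ensuring $\bigcup_{t\ge0}f_t(\UD)=\C$ (the weaker condition $\int_0^{+\infty}\Re p(0,t)\,\di t=+\infty$ discussed above) forces $\diam\Gamma_t\to\infty$. Under~\eqref{EQ_Becker-condition} each $f_t$ extends to a homeomorphism of~$\overline{\UD}$, so the formula in~\eqref{EQ_BeckerExt} makes sense, and injectivity of~$F$ on $\{|z|\ge1\}$ reduces to the disjointness of distinct curves: if $f_{t_1}(e^{i\theta_1})=f_{t_2}(e^{i\theta_2})$ with $t_1<t_2$, then the common value lies on $\Gamma_{t_1}\subset\overline{f_{t_1}(\UD)}\subset f_{t_2}(\UD)$ yet also on $\partial f_{t_2}(\UD)$, which is impossible, and $t_1=t_2$ forces $\theta_1=\theta_2$ by univalence of $f_{t_1}$ on $\UC$. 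Continuity across $\{|z|=1\}$ follows from $f_{\log\rho}(e^{i\theta})\to f_0(e^{i\theta})$ as $\rho\to1^+$, and $F(z)\to\infty$ as $|z|\to\infty$ from $\diam\Gamma_t\to\infty$; a continuous bijection of the compact space~$\ComplexE$ is a homeomorphism.

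The core of the argument is~(iii). On $\UD$ we have $\bar\partial F=0$ since $F=f_0$ is holomorphic. On $\{|z|>1\}$ I would pass to the variables $t=\log|z|$ and $\zeta=z/|z|$, compute
\[
\partial_z t=\tfrac1{2z},\quad \partial_{\bar z}t=\tfrac1{2\bar z},\quad \partial_z\zeta=\tfrac{\zeta}{2z},\quad \partial_{\bar z}\zeta=-\tfrac{\zeta}{2\bar z},
\]
and, using that $f_t$ is holomorphic in $\zeta$ together with the radial Loewner--Kufarev relation $\partial_t f_t(\zeta)=\zeta f'_t(\zeta)\,p(\zeta,t)$ (this is~\eqref{EQ_genLK-PDE} with $G=-zp$), obtain
\[
\partial_z F=\frac{\zeta f'_t(\zeta)}{2z}\big(1+p(\zeta,t)\big),\qquad \bar\partial F=\frac{\zeta f'_t(\zeta)}{2\bar z}\big(p(\zeta,t)-1\big).
\]
Hence $\mu_F=\dfrac{z}{\bar z}\cdot\dfrac{p(\zeta,t)-1}{p(\zeta,t)+1}$, and since $|z/\bar z|=1$ and $U(k)$ is precisely the set where $|(w-1)/(w+1)|\le k$, condition~\eqref{EQ_Becker-condition} (passed to the boundary values of the bounded holomorphic function $p(\cdot,t)$) yields $|\mu_F|\le k$ a.e.\ on $\{|z|>1\}$.

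The main obstacle is the boundary regularity hidden in~(ii)--(iii): one must justify that $f_t$ and $p(\cdot,t)$ have enough boundary regularity to differentiate $(\theta,t)\mapsto f_t(e^{i\theta})$ and to evaluate $\partial_t f_t=\zeta f'_t\,p$ on $\UC$, that $F\in W^{1,2}_{\rm loc}$, and that the real-analytic arc $\{|z|=1\}$ is removable, so that the local bound $|\mu_F|\le k$ on $\{|z|>1\}$ upgrades to global $k$-quasiconformality via the gluing and removability results of~\cite{LehtoVirtanen:1973}. Obtaining these estimates uniformly in~$t$ is exactly what makes the classical computation delicate; it is precisely to bypass this difficulty that one replaces the direct calculation by an argument based on Slodkowski's Extended $\lambda$-Lemma.
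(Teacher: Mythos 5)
Your strategy --- direct verification that the explicit map \eqref{EQ_BeckerExt} is a $k$-q.c.\ homeomorphism --- is the classical route (essentially Becker's own), and your chain-rule computation of $\mu_F$ on $\{|z|>1\}$ is formally the right one; note for context that the paper does not prove this statement at all (it quotes it from \cite{Becker:1972,Becker:1976}) and instead proves the generalization Theorem~\ref{TH_q.c.-extension-via-lambda-lemma} by the $\lambda$-Lemma. However, your write-up has a genuine gap which you flag but do not close, and it is fatal as the argument stands. Steps (i) and (iii) both presuppose boundary regularity that is not available a priori and is, in substance, part of the conclusion: formula \eqref{EQ_BeckerExt} defines a map on $\{|z|\ge1\}$ only if each $f_t$ extends continuously to $\UC$, and your injectivity argument additionally needs this extension to be injective and the strict nesting $\overline{f_{t_1}(\UD)}\subset f_{t_2}(\UD)$. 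You assert that \eqref{EQ_Becker-condition} gives a homeomorphic extension of $f_t$ to $\overline\UD$, but a univalent function on $\UD$ need not extend continuously to $\UC$, and the fact that it does here (equivalently, that $f_t(\UD)$ is a Jordan domain, indeed a quasidisk) is precisely what the theorem is establishing --- so the reasoning is circular at this point. Likewise, the identity $\partial_t f_t(\zeta)=\zeta f_t'(\zeta)p(\zeta,t)$ that drives your dilatation bound is evaluated at $\zeta\in\UC$, where neither $f_t'$ nor $p(\cdot,t)$ is defined. (Also, $\diam f_t(\UC)\to\infty$ does not by itself give $F(z)\to\infty$; one needs $\mathrm{dist}(f_t(0),f_t(\UC))\to\infty$, e.g.\ via Koebe's theorem and $|f_t'(0)|\ge|f_0'(0)|e^{ct}$.) Conceding in your last paragraph that this boundary regularity ``is exactly what makes the classical computation delicate'' and that the $\lambda$-Lemma ``bypasses'' it does not repair the proof of the classical statement you were asked to prove.

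The missing device that turns your computation into a proof is Becker's approximation, which the paper itself uses (``Following Becker's proof\ldots'') in the proof of Theorem~\ref{TH_Becker-converse}: for $r\in(0,1)$ set $\Phi_r(z):=f_0(rz)/r$ for $z\in\UD$ and $\Phi_r(e^{t+i\theta}):=f_t(re^{i\theta})/r$ for $t\ge0$, $\theta\in\Real$. Every object is now evaluated at \emph{interior} points, so your computation is rigorous and yields, for a.e.\ $t$, $\mu_{\Phi_r}(e^{t+i\theta})=e^{2i\theta}\,\bigl(p(re^{i\theta},t)-1\bigr)/\bigl(p(re^{i\theta},t)+1\bigr)$, whence $|\mu_{\Phi_r}|\le k$ a.e.\ by \eqref{EQ_Becker-condition}. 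Injectivity requires no Jordan curves: if $f_{t_1}(re^{i\theta_1})=f_{t_2}(re^{i\theta_2})$ with $t_2>t_1$, then $\varphi_{t_1,t_2}(re^{i\theta_1})=re^{i\theta_2}$, contradicting the evolution-family estimate $|\varphi_{s,t}(z)|\le|z|\,e^{-c(t-s)}$ with $c:=(1-k)/(1+k)$, which holds because \eqref{EQ_Becker-condition} forces $\Re p\ge c$; this also gives the strict nesting you wanted. Thus each $\Phi_r$ is a $k$-q.c.\ homeomorphism of $\ComplexE$ fixing $0$ and $\infty$ with fixed derivative at $0$ (after a harmless affine normalization of the chain). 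Letting $r\to1^-$ and using compactness of this normalized family of $k$-q.c.\ maps, see \cite[Lemma~5.3.5 on p.\,171]{PDE-qc}, one obtains a $k$-q.c.\ limit $F$; on $\UD$ we have $\Phi_r\to f_0$ locally uniformly, so $F|_\UD=f_0$, and the continuity of $F$ yields \emph{a posteriori} the continuous boundary extensions of the $f_t$ together with $F(e^{t+i\theta})=f_t(e^{i\theta})$, i.e.\ exactly \eqref{EQ_BeckerExt}; the case of general $f_t$ then follows by your shift remark. Without this approximation-and-compactness step, none of your steps (i)--(iii) can actually be carried out.
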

Many sufficient conditions for q.c.-extendibility can be deduced from the above theorem, see e.g.~\cite[\S\S5.3-5.4]{Becker:1980}
and~\cite{Hotta::Explicit,Hotta::ComplCoeff}. A similar result for chordal Loewner chains, i.e. the Loewner chains associated with
Herglotz vector fields of the form $G(z,t)=(1-z)^2p(z,t)$, was obtained in~\cite{HottaGum::QC-chordal}. More generally,
Hotta~\cite{Ikkei-pre} showed that Becker's condition~\eqref{EQ_Becker-condition} is sufficient for q.c.-extendibility also in case of
Herglotz vector fields~\eqref{EQ_BP} with arbitrary measurable function~$\tau:[0,+\infty)\to\overline\UD$. Below we give a simpler
proof of this fact using the Extended $\lambda$-Lemma due to Slodkowski~\cite{Slodkowski:1991}.

\begin{theorem}\label{TH_q.c.-extension-via-lambda-lemma} Let $k\in[0,1)$ and let $(f_t)$ be a Loewner chain associated with a
Herglotz vector field~$G$ such that the Herglotz function $p$ in representation~\eqref{EQ_BP} satisfies Becker's
condition~\eqref{EQ_Becker-condition}. Then:
\begin{itemize}
 \item[(i)] $\cup_{t\ge0}f_t(\UD)=\Complex$;
 \item[(ii)]~all the elements of the Loewner chain~$(f_t)$ and of the evolution family~$(\varphi_{s,t})$ associated with~$G$ admit
     $k$-q.c. extensions to~$\ComplexE$.
\end{itemize}
\end{theorem}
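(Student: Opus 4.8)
My plan is to derive both statements from Slodkowski's Extended $\lambda$-Lemma by embedding the given data into a holomorphic-in-$\lambda$ family of Loewner evolutions indexed by the unit disk, in which $\lambda=k$ recovers the original objects while $\lambda=0$ is an explicitly solvable, Möbius case. The starting point is purely geometric: the set $U(k)$ in~\eqref{EQ_Becker-condition} is the image of $\{c\colon|c|\le k\}$ under the Cayley map $c\mapsto(1+c)/(1-c)$, so Becker's condition is equivalent to $|\omega(z,t)|\le k$ for $\omega:=(p-1)/(p+1)$, and in particular it forces the uniform bound $\Re p(z,t)\ge(1-k)/(1+k)>0$ for all $z\in\UD$ and a.e.~$t$. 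Assuming $k>0$ (the case $k=0$ being conformal), set $\tilde\omega:=\omega/k$, so that $\tilde\omega(\cdot,t)$ is holomorphic with $|\tilde\omega(\cdot,t)|\le1$, and define for $\lambda\in\UD$ the Herglotz function $p_\lambda:=(1+\lambda\tilde\omega)/(1-\lambda\tilde\omega)$ together with the Herglotz vector field $G_\lambda(z,t):=(\tau(t)-z)(1-\overline{\tau(t)}z)\,p_\lambda(z,t)$. Then $p_0\equiv1$, $p_k=p$, each $G_\lambda$ is holomorphic in $(\lambda,z)$ jointly, and $G_k=G$; I denote by $(\varphi^\lambda_{s,t})$ and $(f^\lambda_t)$ the evolution family and a Loewner chain associated with $G_\lambda$.

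Two facts drive the argument. First, $\lambda\mapsto\varphi^\lambda_{s,t}(z)$ is holomorphic on $\UD$ for fixed $s,t,z$, which I would obtain from the analytic dependence of solutions of the Carath\'eodory ODE~\eqref{EQ_genLK-ODE} on a holomorphic parameter. Second, the base case $\lambda=0$ is \emph{M\"obius}: with $p_0\equiv1$ the field $G_0(z,t)=(\tau(t)-z)(1-\overline{\tau(t)}z)$ is quadratic in $z$, so~\eqref{EQ_genLK-ODE} is a Riccati equation and its flow maps $\varphi^0_{s,t}$ are M\"obius transformations of $\ComplexE$; the corresponding chain $f^0_t$ is M\"obius as well (for instance $f^0_t(z)=e^tz$ when $\tau\equiv0$, and $f^0_t(z)=\tfrac1{1-z}-t$ when $\tau\equiv1$). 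The crucial consequence is that $\varphi^0_{s,t}$ and $f^0_t$ are conformal on all of $\ComplexE$.

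To prove (ii) for the evolution family, fix $t\ge s\ge0$ and consider $\Phi(\lambda,w):=\varphi^\lambda_{s,t}\big((\varphi^0_{s,t})^{-1}(w)\big)$ for $w\in\varphi^0_{s,t}(\UD)$. By the holomorphicity in $\lambda$ and the univalence of the maps $\varphi^\lambda_{s,t}$, this is a holomorphic motion of $\varphi^0_{s,t}(\UD)\subset\ComplexE$ over $\UD$ based at the identity. Slodkowski's theorem extends it to a holomorphic motion of $\ComplexE$ whose $\lambda$-slice is $|\lambda|$-q.c.; evaluating at $\lambda=k$ gives a $k$-q.c. self-homeomorphism of $\ComplexE$ agreeing with $\varphi_{s,t}\circ(\varphi^0_{s,t})^{-1}$ on $\varphi^0_{s,t}(\UD)$. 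Post-composing with the M\"obius map $\varphi^0_{s,t}$, which preserves the dilatation bound because it is conformal on $\ComplexE$, yields a $k$-q.c. extension of $\varphi_{s,t}=\varphi^k_{s,t}$. The same scheme applied to $f^\lambda_t\circ(f^0_t)^{-1}$ on $f^0_t(\UD)$, followed by post-composition with the M\"obius map $f^0_t$, extends $f_t$; and since extending every $f_t$ amounts to extending $f_0$ for each time-shifted chain (which again satisfies Becker's condition), it suffices to treat $f_0$.

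Part (i) is where I expect the genuine work, and it does not use the $\lambda$-Lemma. The uniform lower bound $\Re p\ge(1-k)/(1+k)>0$ makes the total expansion of the evolution family infinite, so the Loewner range $\cup_{t\ge0}f_t(\UD)$ cannot be a proper subdomain and must equal $\Complex$; I would carry this out through the analysis of the Loewner range (cf.~Sect.~\ref{S_L-range}), which is the most delicate ingredient. The only other potential difficulty is bookkeeping rather than conceptual: the chains $f^\lambda_t$ must be reconstructed so as to depend holomorphically on $\lambda$ \emph{and} to reduce to the M\"obius map $f^0_t$ at $\lambda=0$, i.e. the conformal normalization of the reconstructed chains has to be chosen coherently in $\lambda$. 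Once the holomorphic dependence and the M\"obius base are secured, the appeal to Slodkowski's Extended $\lambda$-Lemma is immediate.
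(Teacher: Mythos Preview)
Your approach is essentially the paper's: the deformation $p_\lambda=(1+\lambda\tilde\omega)/(1-\lambda\tilde\omega)$ is exactly the paper's construction specialized to centre $a(t)\equiv1$, the observation that $G_0$ has vanishing third derivative (Riccati field) so that $\varphi^0_{s,t}$ is M\"obius is the same, and the appeal to Slodkowski for the evolution family is identical. Two small remarks. First, a slip: with your normalization $\Phi(\lambda,w)=\varphi^\lambda_{s,t}\big((\varphi^0_{s,t})^{-1}(w)\big)$ you must \emph{pre}-compose the extended slice with $\varphi^0_{s,t}$ (not post-compose) to recover an extension of $\varphi_{s,t}$; the paper avoids this by taking the motion $(\varphi^0_{s,t})^{-1}\circ\varphi^\lambda_{s,t}$ on~$\UD$ and then post-composing. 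Second, for the chain the paper does \emph{not} build a $\lambda$-holomorphic family $(f^\lambda_t)$ at all---which, as you correctly flag, requires controlling a limit uniformly in~$\lambda$ and is more than bookkeeping---but instead passes from the already-obtained $k$-q.c.\ extensions of $\varphi_{s,t}$ to an extension of the standard chain by a normal-families/compactness argument (the explicit limit construction of~\cite[Theorem~1.6]{CDGloewner-chains} combined with \cite[Theorem~14.1]{Schober:1975}). Your identification of part~(i) as the separate, harder ingredient handled in Sect.~\ref{S_L-range} is exactly right.
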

\begin{remark}
By \cite[Theorem~1.6]{CDGloewner-chains}, for any Herglotz vector field~$G$ there exists a \textit{unique} associated Loewner
chain~$(f_t)$ such that $f_0(0)=f_0'(0)-1=0$ and $\cup_{t\ge0}f_t(\UD)$ is the whole complex plane or a disk centered at the
origin. A Loewner chain~$(f_t)$ satisfying these conditions is called \textsl{standard}.
\end{remark}
\noindent{\bf Proof of Theorem~\ref{TH_q.c.-extension-via-lambda-lemma}.} First we will replace~\eqref{EQ_Becker-condition} by a
weaker condition and prove~(ii) for the standard Loewner chain~$(f_t)$ associated with~$G$. Namely, assume that there exists locally
integrable ${a:[0,+\infty)\to\UH\cup i\R}$ such that
\begin{equation}\label{EQ_Becker-weaker}
p(\UD,t)\subset D_t\quad\text{for a.e. $t\ge0$},
\end{equation}
where $D_t$ is the closed hyperbolic disk in~$\UH$ of radius~$\tfrac12\log\tfrac{1+k}{1-k}$ centered at~$a(t)$ when
$a(t)\in\UH$, and $D_t:=\{a(t)\}$ when $a(t)\in i\R$. If $a(t)\equiv1$, then \eqref{EQ_Becker-weaker} becomes Becker's
condition~\eqref{EQ_Becker-condition}.

For all $t\in\mathcal Q:=\{t\ge0:\Re a(t)>0\}$ and $\lambda\in\UD$, set $p_\lambda(\cdot,t):=H_t\circ \phi_\lambda(\cdot,t)$,
where
$$\phi_\lambda(\cdot,t):=\frac{\lambda}{k}~H_t^{-1}\circ p(\cdot,t)\quad\text{and}\quad H_t(z):=\frac{1+z}{1-z}\,\Re
a(t)\,+\,i\Im a(t).$$ For $t\in[0,+\infty)\setminus\mathcal Q$, $p(\cdot,t)$ is a constant belonging to~$i\R$ and we set
$p_\lambda(\cdot,t):=p(\cdot,t)$ for all such~$t$ and all~$\lambda\in\UD$. Since $H_t^{-1}(D_t)=\{z:|z|\le k\}$ for
all~$t\in\mathcal Q$, condition~\eqref{EQ_Becker-weaker} implies that $|\phi_\lambda(z,t)|\le1$ for a.e.~$t\in\mathcal Q$ and
all~$z,\lambda\in\UD$. It follows that~$p_\lambda$ is a Herglotz function for any~$\lambda\in\UD$.   For each $\lambda\in\UD$, let
$(\varphi_{s,t}^\lambda)$ stand for the evolution family associated with the Herglotz vector field
$G_\lambda(z,t):=\big(\tau(t)-z\big)\big(1-\overline{\tau(t)}z\big)\,p_\lambda(z,t)$. Note that  $p_k=p$ and hence
$(\varphi_{s,t}^k)=(\varphi_{s,t}).$

By the very construction, $\UD\ni\lambda\mapsto p_\lambda(z,t)$ is holomorphic for all~$z\in\UD$ and a.e.~$t\ge0$. Moreover, for
any compact sets $K_1,K_2\subset\UD$,
 $$t~\mapsto\max_{(\lambda,z)\in K_1\times K_2}|p_\lambda(z,t)|$$ is locally integrable on~$[0,+\infty)$.
Following the standard arguments used in \cite[\S2]{CDGannulusI}, it is easy to conclude that
$\UD\ni\lambda\mapsto\varphi^\lambda_{s,t}(z)$ is holomorphic whenever $t\ge s\ge0$ and $z\in\UD$.

For any $s\ge0$ and $\lambda\in\UD$, the Schwarzian $S\varphi^{\lambda}_{s,t}$ of $\varphi_{s,t}^\lambda$ satisfies
$$
\frac{\di}{\di t} S\varphi^{\lambda}_{s,t}(z)=\big({\di}\varphi^{\lambda}_{s,t}(z)/{\di z}\big)^2\,
G_\lambda'''\big(\varphi^{\lambda}_{s,t}(z),t\big) \quad\text{a.e.~$t\ge s$.}$$  In particular, $G_0'''\equiv0$ and hence
$\varphi_{s,t}^0$'s are linear-fractional maps.

Now, fix some $s\ge0$ and $t\ge s$. Note that  $\varphi^\lambda_{s,t}:\UD\to\UD$ is univalent for any $\lambda\in\UD$. Therefore,
$(\lambda,z)\mapsto\psi_\lambda(z):={(\varphi_{s,t}^0)^{-1}\circ\varphi^\lambda_{s,t}}$ is a holomorphic motion of~$\UD$. By
the Extended \hbox{$\lambda$-Lemma}, see, e.g. \cite[Theorem~12.3.2 on p.\,298]{PDE-qc}, it extends to a holomorphic motion
$\UD\times\ComplexE\ni(\lambda,z)\mapsto\Psi_\lambda(z)\in\ComplexE$ of~$\ComplexE$ and moreover, for any~$\lambda\in\UD$,
the map $\Phi^\lambda_{s,t}$ is a $|\lambda|$-q.c. automorphism of~$\ComplexE$. In particular, for~$\lambda:=k$ we obtain a
$k$-q.c. extension of $\varphi_{s,t}=\varphi_{s,t}^k$ defined by the formula $\Phi_\lambda:=\varphi_{s,t}^0\circ\Psi_\lambda$.  To
prove $k$-q.c. extendibility of~$f_t$'s, it remains to use the explicit construction of the associated standard Loewner chain given in the
proof of~\cite[Theorem~1.6]{CDGloewner-chains} and apply \cite[Theorem~14.1 on p.\,148]{Schober:1975}.

Now let us assume that $p$ satisfies~\eqref{EQ_Becker-condition}. Then (i) holds by Theorem~\ref{TH_LoewnerRange}, which we will
prove in Sect.\,\ref{S_L-range}. As a consequence, according to \cite[Theorem~1.6]{CDGloewner-chains},  any two Loewner chains
associated with~$G$ differ by a linear map. Therefore, (ii)~holds for any Loewner chain~$(f_t)$ associated with~$G$.\proofbox

\begin{remark}
In contrast to Becker's classical result, the q.c.-extensions of the evolution family and of the standard Loewner chain~$(f_t)$ in
Theorem~\ref{TH_q.c.-extension-via-lambda-lemma} do not have to fix~$\infty$. For the special case of constant~$\tau$, the
linear-fractional maps $\varphi_{s,t}^0$ have a fixed point at~$\tau^*:=1/\overline\tau$ and hence before extending
to~$\ComplexE$, we may define the holomorphic motion $(\lambda,z)\mapsto\psi_\lambda(z)$ also at the point $\tau^*$  by
setting $\psi_\lambda(\tau^*):=\tau^*$ for all~$\lambda\in\UD$. As a result, in this case, $\varphi_{s,t}$'s admit $k$-q.c.
extensions to~$\ComplexE$ with a fixed point at~$\tau^*$.
\end{remark}

\begin{remark}
Another way  to apply the Extended $\lambda$-Lemma to the problem of q.c.-extendibility (not related to Loewner chains) was found
by Sugawa~\cite{SugawaPol}. The Loewner\,--\,Kufarev equation for q.c.-extendible functions  as an evolution in the universal
Teichm\"uller space was studied by Vasil'ev~\cite{Vasiliev1,Vasiliev2}.
\end{remark}

\section{Characterization of Becker's Extensions}\label{S_BeckExt}
The q.c.-extensions produced by Becker's construction form a proper subclass in the class of all q.c.-maps of $\ComplexE$ that are
conformal in~$\UD$ and have a fixed point at~$\infty$.  Below we give a comparatively simple characterization of Becker's extensions.
To be precise, we start by introducing the following definition.
\begin{definition}\label{DF_Becker-qc-ext}
A \defi{Becker extension} of a function $f\in\clS$ is a q.c.-map ${F:\ComplexE\to\ComplexE}$ with $F|_\UD=f$, $F(\infty)=\infty$ and
such that there exists a radial Loewner chain~$(f_t)$ satisfying the following conditions:
\begin{itemize}
\item[(i)] $f_0=f$;
\item[(ii)] for any $t\ge0$, the function~$f_t$ extends continuously to~$\partial\UD$, with $f_t(\zeta)=F(e^t\zeta)$ for
    all~$\zeta\in\partial\UD$.
\end{itemize}
If exists, this radial Loewner chain $(f_t)$  is clearly unique; in what follows it will be referred to as the Loewner chain
\defi{associated} with the Becker extension~$F$.
\end{definition}
Denote by $\mathcal H^\infty(\UD)$ the Hardy class of all bounded holomorphic functions in~$\UD$. We say that a complex-valued
function $\nu\in L^{\infty}(\partial\UD)$ \defi{represents the boundary values} of a function $\varphi\in\mathcal H^\infty(\UD)$, if
$\lim_{r\to1-}\varphi(r\zeta)=\nu(\zeta)$ for a.e.~${\zeta\in\partial\UD}$.
\begin{theorem}\label{TH_Becker-converse}
Let $k\in[0,1)$ and let $F:\ComplexE\to\ComplexE$, $F(\infty)=\infty$, be a $k$-q.c. extension of some function $f\in\clS$. The
following assertions hold:\vskip1.5ex
\begin{itemize}
\item[(I)] $F$ is a Becker extension of~$f$ if and only if the complex dilatation $\mu_F$ of~$F$ obeys the following property: for
    a.e. $\rho>1$ the map $\UC\ni\zeta\mapsto\mu_F(\rho\zeta)$ represents boundary values of some $\varphi_\rho\in\mathcal
    H^\infty(\UD)$ with $\varphi_\rho(0)=\varphi_\rho'(0)=0$.\\
\item[(II)] If $F$ is a Becker extension of~$f$, then the Loewner chain~$(f_t)$ associated with~$F$ satisfies the
    Loewner\,--\,Kufarev equation
$$
 \frac{\partial f_t(z)}{\partial t}=zf_t'(z)p(z,t),\quad t\ge0,~z\in\UD,
$$
where $p(z,t):=(1+\varphi_{e^t}(z)/z^2)/(1-\varphi_{e^t}(z)/z^2)$ for all~$z\in\UD$ and a.e.\,$t\ge0$. In~particular, $p$
satisfies Becker's condition~\eqref{EQ_Becker-condition}.
\end{itemize}
\end{theorem}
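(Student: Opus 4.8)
The plan is to extract both assertions from a single explicit computation of the complex dilatation of a Becker extension on the region $\{|z|>1\}$. Suppose first that $F$ is a Becker extension with associated radial Loewner chain $(f_t)$ and Herglotz function $p$ (so $G(z,t)=-zp(z,t)$ and $\Re p(\cdot,t)\ge0$). Writing $z=\rho e^{i\theta}$ with $\rho>1$ and $t:=\log\rho$, condition (ii) gives $F(z)=f_t(e^{i\theta})$. Passing to the coordinate $w=t+i\theta$ via $z=e^{w}$, so that $\partial F=z^{-1}\partial_w\Phi$ and $\bar\partial F=\bar z^{-1}\partial_{\bar w}\Phi$ for $\Phi(w):=F(e^{w})=f_t(e^{i\theta})$, I would compute
\[
\partial F=\frac{1}{2z}\big(\dot f_t+e^{i\theta}f'_t\big),\qquad
\bar\partial F=\frac{1}{2\bar z}\big(\dot f_t-e^{i\theta}f'_t\big),
\]
where $\dot f_t$ and $f'_t$ denote boundary values at $e^{i\theta}$. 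Hence $\mu_F(\rho e^{i\theta})=(z/\bar z)\,(\dot f_t-e^{i\theta}f'_t)/(\dot f_t+e^{i\theta}f'_t)=e^{2i\theta}\,(\dot f_t-e^{i\theta}f'_t)/(\dot f_t+e^{i\theta}f'_t)$.

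Inserting the boundary form of the radial Loewner\,--\,Kufarev equation $\dot f_t(\zeta)=\zeta f'_t(\zeta)\,p(\zeta,t)$ at $\zeta=e^{i\theta}$, the factors $e^{i\theta}f'_t$ cancel and, setting $\omega(\cdot,t):=\big(p(\cdot,t)-1\big)/\big(p(\cdot,t)+1\big)$, the identity collapses to
\[
\mu_F(\rho e^{i\theta})=e^{2i\theta}\,\omega(e^{i\theta},t),\qquad t=\log\rho.
\]
Since $\Re p(\cdot,t)\ge0$, the map $\omega(\cdot,t)$ is holomorphic from $\UD$ into $\overline{\UD}$, so $\varphi_\rho(z):=z^2\omega(z,\log\rho)$ lies in $\mathcal H^\infty(\UD)$ with $\varphi_\rho(0)=\varphi'_\rho(0)=0$, and its radial boundary values equal $\zeta^2\omega(\zeta,\log\rho)=\mu_F(\rho\zeta)$; this proves the ``only if'' part of (I). For (II) the same identity reads $\omega(z,t)=\varphi_{e^t}(z)/z^2$, whence $p=(1+\omega)/(1-\omega)$ takes the stated form. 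Because $F$ is $k$-q.c. we have $\esssup|\mu_F|\le k$, so the maximum principle gives $\|\varphi_\rho\|_{\mathcal H^\infty}\le k$, hence $|\omega|\le k$ and $p$ obeys Becker's condition~\eqref{EQ_Becker-condition}.

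For the converse in (I), assume $\mu_F$ has the stated boundary structure. The maximum principle forces $\|\varphi_\rho\|_\infty\le\esssup|\mu_F|\le k$, and since $\varphi_\rho$ has a double zero at the origin, Schwarz's lemma yields $|\varphi_\rho(z)|\le k|z|^2$; thus $\omega(z,t):=\varphi_{e^t}(z)/z^2$ satisfies $|\omega|\le k$ and $p(z,t):=(1+\omega)/(1-\omega)$ is a Herglotz function obeying Becker's condition. Local integrability in $t$ is immediate from $|p|\le(1+k)/(1-k)$, while the required measurability follows from the joint measurability of $\mu_F$ and the Poisson representation recovering $\varphi_\rho$ from its boundary values $\mu_F(\rho\,\cdot)$. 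Let $(f_t)$ be the radial Loewner chain with Herglotz function $p$, normalized so that $f_0\in\clS$; this is legitimate because $\Re p(0,t)\ge(1-k)/(1+k)>0$ gives $\int_0^{+\infty}\Re p(0,t)\,\di t=+\infty$, so the Loewner range is all of~$\C$ and the chain is unique up to an affine map. By Theorem~\ref{TH_Becker}, $(f_t)$ produces a $k$-q.c. extension $\tilde F$ of $f_0$ fixing $\infty$, and the forward computation shows $\mu_{\tilde F}(\rho\zeta)=\zeta^2\omega(\zeta,\log\rho)=\varphi_\rho(\zeta)=\mu_F(\rho\zeta)$ a.e.\ on $\{|z|>1\}$, while both dilatations vanish on $\UD$. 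Hence $\mu_{\tilde F}=\mu_F$ a.e.; by the uniqueness in the measurable Riemann mapping theorem, $F=A\circ\tilde F$ for an affine $A$, and matching $f(0)=f_0(0)=0$, $f'(0)=f_0'(0)=1$ forces $A=\mathrm{id}$. Therefore $F=\tilde F$, $f_0=f$, and $F$ is the Becker extension associated with $(f_t)$.

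The main obstacle I anticipate is the boundary regularity underlying the dilatation computation: the Loewner\,--\,Kufarev PDE holds a priori only in $\UD$, whereas the formula for $F$ on $\{|z|>1\}$ uses the boundary values $f_t(e^{i\theta})$, so one must justify differentiating $F$ there and passing the identity $\dot f_t=\zeta f'_t\,p$ to the boundary. I would handle this through the q.c.\ regularity already guaranteed by Theorem~\ref{TH_Becker} (so that $F$ is absolutely continuous on lines and differentiable a.e.), computing $\partial F$ and $\bar\partial F$ at a.e.\ point of a.e.\ circle $|z|=\rho$ and identifying the radial and tangential derivatives with the boundary values of $\dot f_t$ and $f'_t$. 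The delicate interchange of the limit $r\to1^-$ with differentiation in $t$ and $\theta$ is where the argument must be made carefully, for instance by first working on circles $|z|=r<1$, where the PDE is classical, and then letting $r\to1$.
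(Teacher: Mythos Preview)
Your forward direction (necessity in (I) and all of (II)) is exactly the paper's computation, and you have correctly identified the one real difficulty: passing the identity $\dot f_t=\zeta f'_t\,p$ to $\partial\UD$. The paper isolates this as a lemma and proves it with two separate tools: for the tangential derivative, absolute continuity of $\theta\mapsto F(e^{t+i\theta})$ together with F.~Riesz's theorem gives $\lim_{r\to1^-}ie^{i\theta}f'_t(re^{i\theta})=\partial_\theta F(e^{t+i\theta})$; for the time derivative, the paper writes $(f_{t+h}-f_t)/h$ as the Poisson integral of $(F(e^{t+h+i\theta})-F(e^{t+i\theta}))/h$ and applies dominated convergence, the domination coming from the local integrability of the q.c.\ Lipschitz quotient $L_\varepsilon(z)=\sup_{0<|w-z|<\varepsilon}|F(w)-F(z)|/|w-z|$. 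The paper also records explicitly that $\partial F\neq0$ a.e.\ (since $J_F>0$ a.e.), which is needed to divide. Your sketch (``ACL, differentiate a.e., let $r\to1$'') is compatible with this, but the specific ingredients above are what make the limit interchange work.

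For the converse you take a genuinely different route. The paper does \emph{not} apply the forward computation to the Becker extension $\tilde F=\Phi$; instead it introduces the smooth approximations $\Phi_r(e^{t+i\theta}):=f_t(re^{i\theta})/r$, computes $\mu_{\Phi_r}$ directly (no boundary passage needed since $r<1$), shows $\mu_{\Phi_r}\to\mu_F$ a.e., and concludes $\Phi_r\to F$ by parametric continuity of normalised Beltrami solutions while also $\Phi_r\to\Phi$ by construction. Your argument---apply the already-proved forward direction to $\tilde F$ to get $\mu_{\tilde F}=\mu_F$ a.e., then invoke uniqueness in the measurable Riemann mapping theorem with the normalisation $F(0)=0$, $F'(0)=1$, $F(\infty)=\infty$---is shorter and perfectly valid, but it makes the sufficiency part logically dependent on the boundary lemma used in the necessity part, whereas the paper's approximation argument is self-contained.
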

\begin{remark}
Statement~(I) of the above theorem can be rewritten as follows: $F$~is a Becker extension if and only if  the Fourier coefficients
$a_n(\rho):=\tfrac1{2\pi}\int_0^{2\pi}\!e^{-in\theta}\mu_F(\rho e^{i\theta})\,\di\theta$ vanish for a.e.~$\rho>1$ and all integer
$n\le 1$.
\end{remark}

In the proof, we will need the following lemma.
\begin{lemma}\label{LM_conv}
Let $F$ be a Becker extension of some $f\in\clS$. Then for a.e.\,$t\ge0$, the Loewner chain~$(f_t)$ associated with~$F$ satisfies:
\begin{align}
\label{EQ_f-prime}
\lim_{r\to1-}ie^{i\theta}f'_t(re^{i\theta})&=\frac{\partial F(e^{t+i\theta})}{\partial\theta}&&\text{for
a.e.\,$\theta\in[0,2\pi]$,}\\
\label{EQ_f-dot}
 \lim_{r\to1-}\,\,\frac{\partial f_t(re^{i\theta})}{\partial t}\,\,&=\frac{\partial F(e^{t+i\theta})}{\partial t}&&\text{for
a.e.\,$\theta\in[0,2\pi]$}.
\end{align}
\end{lemma}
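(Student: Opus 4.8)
The plan is to prove both identities by passing to radial limits from inside~$\UD$, exploiting that the boundary function $\Phi(t,\theta):=F(e^{t+i\theta})=f_t(e^{i\theta})$ inherits good regularity from the quasiconformality of~$F$. Since $F$ is q.c., the map $(t,\theta)\mapsto F(e^{t+i\theta})$ is, being the composition of a q.c. map with $\exp$, absolutely continuous on almost every line parallel to the coordinate axes; hence for a.e.\ $t\ge0$ the function $\theta\mapsto\Phi(t,\theta)$ is absolutely continuous with $\partial_\theta\Phi(t,\cdot)\in L^1(\UC)$, for a.e.\ $\theta$ the function $t\mapsto\Phi(t,\theta)$ is absolutely continuous, and $\partial_\theta\Phi,\partial_t\Phi\in L^2_{\mathrm{loc}}$ by the Sobolev regularity of~$F$. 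This makes both right-hand sides well defined a.e.\ and reduces each identity to interchanging $\lim_{r\to1-}$ with a tangential ($\partial_\theta$), respectively transversal ($\partial_t$), derivative.

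For \eqref{EQ_f-prime} I would argue through Hardy-space theory. Fix a $t$ for which $g_t:=\Phi(t,\cdot)$ is absolutely continuous. Because $f_t$ is continuous on $\overline{\UD}$, it lies in $H^\infty\subset H^1$ with boundary function $g_t$; writing $f_t(z)=\sum_{n\ge0}a_n z^n$, the Fourier coefficients of $g_t'$ are $in\,a_n$, which vanish for $n\le0$. Hence $g_t'\in L^1(\UC)$ is the boundary function of the holomorphic function $\Gamma_t(z):=\sum_{n\ge1}in\,a_n z^n=iz\,f_t'(z)$, and since the nonnegative-frequency Poisson integral of an $L^1$ datum is an $H^1$ function whose radial limits recover the datum, $\Gamma_t\in H^1$ with $\lim_{r\to1-}\Gamma_t(re^{i\theta})=g_t'(\theta)=\partial_\theta\Phi(t,\theta)$ for a.e.\ $\theta$. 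As $\Gamma_t(re^{i\theta})=ire^{i\theta}f_t'(re^{i\theta})$ and $r\to1$, this gives \eqref{EQ_f-prime}.

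For \eqref{EQ_f-dot} I would start from the interior identity afforded by the radial Loewner chain, $\partial_t f_t(z)=z f_t'(z)\,p(z,t)$ for a.e.\ $t$ and all $z\in\UD$, which I rewrite as $\partial_t f_t(re^{i\theta})=-i\,p(re^{i\theta},t)\,\partial_\theta\big(f_t(re^{i\theta})\big)$. Since $\Re p(\cdot,t)\ge0$, the function $p(\cdot,t)$ has nontangential boundary limits $\nu_t(\theta)$ a.e., so together with \eqref{EQ_f-prime} the radial limit $\Psi(t,\theta):=\lim_{r\to1-}\partial_t f_t(re^{i\theta})=-i\,\nu_t(\theta)\,\partial_\theta\Phi(t,\theta)$ exists a.e. It remains to identify $\Psi$ with $\partial_t\Phi$. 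For this I would use the local absolute continuity in~$t$ of the Loewner chain (condition~LC3) to write, for a.e.\ $\theta$ and all $t_1<t_2$, $f_{t_2}(re^{i\theta})-f_{t_1}(re^{i\theta})=\int_{t_1}^{t_2}\partial_t f_t(re^{i\theta})\,\di t$, then let $r\to1-$ (the left side tends to $\Phi(t_2,\theta)-\Phi(t_1,\theta)$ by continuity of $f_{t_i}$ up to~$\UC$) and pass the limit inside the integral to obtain $\Phi(t_2,\theta)-\Phi(t_1,\theta)=\int_{t_1}^{t_2}\Psi(t,\theta)\,\di t$; Lebesgue differentiation in~$t$, using that $\Phi(\cdot,\theta)$ is absolutely continuous, then yields $\Psi=\partial_t\Phi$ a.e., which is \eqref{EQ_f-dot}.

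The main obstacle is justifying the interchange of $\lim_{r\to1-}$ and $\int_{t_1}^{t_2}\di t$ in the last step, i.e.\ producing a dominating (or uniformly integrable) majorant for $\{\,t\mapsto\partial_t f_t(re^{i\theta})\,\}_{r<1}$ on $[t_1,t_2]$, valid for a.e.\ $\theta$. The factor $f_t'$ is controllable up to the boundary: from \eqref{EQ_f-prime}, $f_t'\in H^1$ (indeed $H^2$, as $\partial_\theta\Phi(t,\cdot)\in L^2$), so its radial maximal function lies in $L^1(\UC)$ with norm bounded by $\|\partial_\theta\Phi(t,\cdot)\|_{L^1}$; a Fubini argument, using $\int_{t_1}^{t_2}\!\int_{\UC}|\partial_\theta\Phi|<\infty$, then makes $\sup_{r}|f_t'(re^{i\theta})|$ integrable in~$t$ for a.e.\ $\theta$. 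The genuinely delicate point is the Herglotz factor $p(re^{i\theta},t)$, which at this stage is not yet known to be bounded—its boundedness is exactly Becker's condition~\eqref{EQ_Becker-condition}, the conclusion of part~(II)—so the domination cannot invoke it. I would handle it by exploiting the positive-real-part structure, specifically that quasiconformality of~$F$ forces the boundary Herglotz measure of $p(\cdot,t)$ to be absolutely continuous, in order to upgrade the a.e.\ convergence of the integrand to $L^1$-convergence (via Vitali's theorem) and so legitimize the interchange without assuming boundedness of~$p$ a priori.
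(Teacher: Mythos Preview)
Your argument for~\eqref{EQ_f-prime} is correct and is essentially the paper's: both invoke the theorem of F.\,Riesz that if a holomorphic function on~$\UD$ extends continuously to~$\overline\UD$ with absolutely continuous boundary values, then its derivative is in~$H^1$ with radial limits equal a.e.\ to the tangential derivative of the boundary function.

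For~\eqref{EQ_f-dot}, however, your route diverges from the paper's and contains a genuine gap at exactly the point you flag. You need, for a.e.~$\theta$, uniform integrability on~$[t_1,t_2]$ of the family $t\mapsto re^{i\theta}f_t'(re^{i\theta})\,p(re^{i\theta},t)$, $r<1$. Your domination of the $f_t'$ factor via the $H^1$ radial maximal function is fine, but the $p$ factor is not under control: even if the Herglotz measure of $p(\cdot,t)$ were absolutely continuous, that only gives finiteness of the radial limit a.e., not a bound on $\sup_{r<1}|p(re^{i\theta},t)|$ (the radial maximal function of a positive harmonic function with $L^1$ boundary data is only weak-$L^1$ in general). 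Moreover, your proposed proof that quasiconformality of~$F$ forces absolute continuity of the Herglotz measure is not spelled out, and the natural route to it---identifying the boundary values of $zf_t'(z)p(z,t)$ with $\partial_t\Phi$---is precisely~\eqref{EQ_f-dot}, so the argument is circular as written. Invoking Vitali does not help unless you produce an independent equi-integrability estimate, and none is given.

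The paper sidesteps this difficulty by reversing the roles of~$t$ and~$\theta$: rather than integrating in~$t$ and taking a radial limit, it represents $f_t(z)$ as the Poisson integral of its boundary values $\Phi(t,\cdot)$, forms the difference quotient in~$t$ under the integral sign, and passes to the limit $h\to0$ by dominated convergence. The dominating function is supplied by a quasiconformal regularity fact: for any q.c.\ map~$F$ the local Lipschitz quotient
\[
L_\varepsilon(z):=\sup_{0<|w-z|<\varepsilon}\frac{|F(w)-F(z)|}{|w-z|}
\]
is locally integrable in~$\C$ (see \cite[Theorem~3.5.3 and Corollary~3.4.7]{PDE-qc}). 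This yields $\partial_t f_t(z)=\frac{1}{2\pi}\int_0^{2\pi}\Poiss(e^{-i\theta}z)\,\partial_t\Phi(t,\theta)\,\di\theta$ for a.e.~$t$, and then~\eqref{EQ_f-dot} follows from the standard a.e.\ radial convergence of the Poisson integral of an $L^1$ function. The advantage is that the Herglotz function~$p$ never appears, so no a priori bound on it is needed.
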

\begin{proof}
For a.e.~$t\ge0$ the map~$\Real\ni\theta\to F(e^{t+i\theta})$ is absolutely continuous and hence by a theorem of F.\,Riesz, see,
e.g. \cite[Theorem~1 in~\S{}IX.5]{Goluzin}, equality~\eqref{EQ_f-prime} takes place.  To prove~\eqref{EQ_f-dot} fix for a moment
some $t\ge0$ and write
\begin{equation}\label{EQ_Poisson-diff}
\frac{f_{t+h}(z)-f_t(z)}{h}=\frac{1}{2\pi}\int\limits_{0}^{2\pi}\Poiss(e^{-i\theta}z)\,
\frac{F(e^{t+h+i\theta})-F(e^{t+i\theta})}{h}\,\,\di\theta
\end{equation}
for all $z\in\UD$ and all $h\ge -t$, where $\Poiss(z):=\Re \big((1+z)/(1-z)\big)$ is the Poisson kernel for~$\UD$. According to
\cite[Theorem~3.5.3 on p.\,66]{PDE-qc} and \cite[Corollary~3.4.7 on p.\,62]{PDE-qc}, for any $\varepsilon>0$ the function
$$L_\varepsilon(z):=\sup\left\{|F(w)-F(z)|/|w-z|: 0<|w-z|<\varepsilon\right\}$$ is locally integrable in~$\C$. It follows that
$\theta\mapsto \sup\{{(F(e^{t+h+i\theta})-F(e^{t+i\theta}))/h}: {h\in\Real,\,0<|h|<\varepsilon}\}$ is integrable on~$[0,2\pi]$ for
a.e.~$t\ge0$. Hence for all such~$t$'s we can apply Lebesgue's dominated convergence theorem to pass in~\eqref{EQ_Poisson-diff} to
the limit as~$h\to0$. As a result we get
$$
\frac{\partial f_t(z)}{\partial t}=\frac{1}{2\pi}\int\limits_{0}^{2\pi}\Poiss(e^{-i\theta}z)\, \frac{\partial F(e^{t+i\theta})
}{\partial t}\,\di\theta
$$
for all~$z\in\UD$ and a.e.~$t\ge0$. Equality~\eqref{EQ_f-dot} follows now by properties of the Poisson integral, see, e.g.
\cite[Corollary~1 in~\S{}IX.1]{Goluzin}.
\end{proof}
\noindent{\bf Proof of Theorem~\ref{TH_Becker-converse}.} Suppose that $F$ is a Becker extension of some~${f\in\clS}$ and
let~$(f_t)$ be the associated Loewner chain. Furthermore, let $p(z,t):=(\tfrac{\partial }{\partial t}f_t(z))/(z f'_t(z))$ be the Herglotz
function of~$(f_t)$. Since  ${\Re p(\cdot,t)\ge0}$,
$$\varphi_{e^t}(z):=\frac{p(z,t)-1}{p(z,t)+1}\,z^2=\frac{\partial f_t(z)/\partial t+i \big(izf'_t(z)\big)}{\partial f_t(z)/\partial t-i
\big(izf'_t(z)\big)}\,z^2$$ is a holomorphic bounded function in~$\UD$ for a.e.~$t\ge0$, with zero of the second order at~$z=0$.
 The Jacobian of $F$, $J_F(z)=|\partial
F|^2-|\bar\partial F|^2$ is positive for a.e.~$z$, see, e.g. \cite[Corollary~3.7.6 on p.\,75]{PDE-qc}. Therefore,
$\big(\tfrac{\partial}{\partial t}-i\tfrac{\partial}{\partial \theta}) F(e^{t+i\theta})\neq0$ for a.e. $(t,\theta)\in\Real\times[0,2\pi]$.
Taking this into account, we immediately deduce from Lemma~\ref{LM_conv} that for a.e.~$t\ge0$ and a.e.~$\theta\in[0,2\pi]$,
$\lim_{r\to1-}\varphi_{e^t}(re^{i\theta})=\mu_F(e^{t+i\theta})$. Since every bounded harmonic function can be recovered from its
radial limits on~$\UC$ by means of  the Poisson integral, see, e.g.\,\cite[p.\,38]{Hoffman:book}, we conclude, in particular, that
$\varphi_{e^{t}}(\UD)\subset k\UD$ for a.e.~$t\ge0$ and hence~$p$ satisfies Becker's condition~\eqref{EQ_Becker-condition}.  The
above argument proves~(II) and the necessity part of~(I).

Now suppose that for a.e.~$\rho>1$, the function
$\UC\ni\zeta\mapsto\mu_F(\rho\zeta)$ represents the boundary values of some ${\varphi_\rho\in\mathcal H^\infty(\UD)}$ with
${\varphi_\rho(0)=\varphi_\rho'(0)=0}$. We have to show that $F$ is a Becker extension. Recall that $\varphi_\rho$ can be recovered
from its radial limits on~$\partial\UD$ by means of the Poisson integral. Therefore, $\sup_{z\in\UD}|\varphi_\rho(z)| \le k$ for a.e.
$\rho>1$ and $\rho\mapsto\varphi_\rho(z)$ is measurable for each $z\in\UD$ by Fubini's Theorem applied
to~$(\rho,\theta)\mapsto{\mu_F(\rho e^{i\theta})(1+e^{-i\theta}z)/(1-e^{-i\theta}z)}$. It follows that the formula
$p(z,t):=\big(1+\varphi_{e^t}(z)/z^2\big)/\big(1-\varphi_{e^t}(z)/z^2\big)$ defines a Herglotz function
satisfying~\eqref{EQ_Becker-condition}.  By Becker's Theorem~\ref{TH_Becker}, the radial Loewner chain~$(f_t)$ associated
with~$p$ generates a $k$-q.c. extension $\Phi:\ComplexE\to\ComplexE$ of~$f_0$ with $\Phi(e^{t+i\theta})=f_t(e^{i\theta})$,
$t\ge0$, $\theta\in\Real$, and $\Phi(\infty)=\infty$. By definition, $\Phi$ is a Becker extension. Hence to complete the proof it
remains to check that  $F=\Phi$.
 Fix for a moment some~$r\in(0,1)$. Following  Becker's proof, we consider the $k$-q.c. map~$\Phi_r$ defined by equalities
$\Phi_r(z):=f_0(rz)/r$ for all~$z\in\UD$ and $\Phi_r(e^{t+i\theta}):=f_t(re^{i\theta})/r$ for all~$t\ge0$ and
all~$\theta\in[0,2\pi]$. Then $\mu_{\Phi_r}|_\UD=0$ and for all $\theta\in[0,2\pi]$ and a.e.~$t\ge0$ we have
 $$
\mu_{\Phi_r}(e^{t+i\theta})=\frac{\partial f_{t}(re^{i\theta})/\partial t\,-\,zf'_{t}(re^{i\theta})}{\partial
f_{t}(re^{i\theta})/\partial
t\,+\,zf'_{t}(re^{i\theta})}\,e^{2i\theta}=\frac{p(re^{i\theta},t)-1}{p(re^{i\theta},t)+1}\,e^{2i\theta}=
\frac{\varphi_{e^t}(re^{i\theta})}{r^2}.
$$
The radial limit of the r.h.s. exists for a.e.~$\theta\in[0,2\pi]$ and equals to~$\mu_F(e^{t+i\theta})$. It follows that
$\mu_{\Phi_r}\to\mu_F$ as~${r\to1^-}$ a.e. in~$\C$. Note also that $\Phi_r$ satisfies the same normalization as~$F$, i.e.
$\Phi_r(0)=0$, $\Phi'_r(0)=1$, and $\Phi_r(\infty)=\infty$.  Therefore,  according to~\cite[Lemma~5.3.5 on p.\,171]{PDE-qc},
$\Phi_r\to F$ in~$\ComplexE$ as $r\to1^-$. On the other hand, it follows easily from the construction that $\Phi_r\to\Phi$
as~$r\to1^-$. This completes the proof. \proofbox
\begin{remark}
It is interesting to compare Becker's explicit construction with the machinery used in the proof of
Theorem~\ref{TH_q.c.-extension-via-lambda-lemma} when applied to radial Loewner chains. Suppose that the Herglotz function $p$
associated with $(f_t)$ satisfies Becker's condition~\eqref{EQ_Becker-condition} and let $F$ be the Becker extension of~$f_0$ defined
by equality~\eqref{EQ_BeckerExt} in Theorem~\ref{TH_Becker}. Consider the holomorphic motion $(\lambda,z)\mapsto
F_\lambda(z)$, where $F_\lambda$ is the unique $|\lambda|$-q.c. automorphism of~$\ComplexE$ satisfying the Beltrami equation
$\bar \partial F_\lambda(z)=k^{-1}\lambda\mu_F(z)\partial F_\lambda(z)$ with the normalization $F_\lambda(\infty)=\infty$,
$F_\lambda(0)=0$, $F'_\lambda(0)=1$. Then according to Theorem~\ref{TH_Becker-converse}, for any~$\lambda\in\UD$,
$F_\lambda$ is a Becker extension of some~$f^\lambda\in\clS$. Let $(f_t^\lambda)$ be its associated classical Loewner chain. It is
easy to see from the proof of Theorem~\ref{TH_Becker-converse} that the Herglotz function of~$(f_t^\lambda)$ coincides with
$p_\lambda$ defined in the proof of~Theorem~\ref{TH_q.c.-extension-via-lambda-lemma}. Therefore, in the classical setting,
Becker's explicit q.c.-extension is one of the q.c.-extensions that may be obtained via our implicit construction based on the
$\lambda$-Lemma.
\end{remark}

\section{Examples and remarks}\label{S_NO}
For $k\in[0,1)$, denote by $\clS_k$ the set of all $f\in\clS$ admitting a $k$-q.c. extension $F\colon\ComplexE\to\ComplexE$
with~$F(\infty)=\infty$. We would like to get some idea about relation between the classes $\clS_k$  and the classes $\clS_k^B$
formed by all~$f\in\clS$ admitting a $k$-q.c. Becker extension. It is difficult to believe that $\clS_k^B$ coincides with $\clS_k$
or constitutes a ``large'' part of it. However,
$$
\clS_k\subset\clS_{3k}^B \qquad\text{for any $k\in[0,\tfrac13)$.}
$$
The above inclusion follows immediately from the following two facts. On the one hand,  for any $f\in\clS_k$, the Schwarzian~$S_f$ of
$f$ satisfies the inequality ${(1-|z|^2)^2 |S_f(z)|\le 6k}$ for all $z\in\UD$, see \cite[Satz~$3^*$]{Kuehnau:1969} or
\cite[Corollary~2]{Lehto:1971}. On the other hand, the condition ${(1-|z|^2)^2 |S_f(z)|\le 2k'}$ for all $z\in\UD$ is sufficient for
$k'$-q.c. extendibility \cite{Ahlfors:1974,Ahlfors-Weil} and, moreover, implies the existence of a $k'$-q.c. Becker extension
\cite[Satz~4.2]{Becker:1972}, \cite[p.\,62--68]{Becker:1980}.

 Moreover, in certain examples, a Becker extension is the best possible in the sense of the maximal dilatation.

\begin{example}\label{EX_extrI}
Fix $k\in[0,1)$. Let $f_1(z):=z/(1-kz)^2$ and $f_2(z):=z/(1-kz^2)$. It  follows readily from \cite[Satz~$3^*$]{Kuehnau:1969},
and from similar results in \cite[Corollaries~1 and~3]{Lehto:1971}, that $f_j$, $j=1,2$, have unique $k$-q.c. extensions
$F_j:\ComplexE\to\ComplexE$ with $F_j(\infty)=\infty$. In fact, $F_j$'s are Becker extensions associated with the Loewner chains
$f^j_t:=e^t f_j$, $t\ge0$, $j=1,2$.
\end{example}

\begin{example}\label{EX_extrII}
Let $\sigma\in(0,2)$ and consider $f_\sigma\in\clS$ given by~$f_\sigma:=\sigma^{-1}H^{-1}\circ g_\sigma\circ H$, where
$g_\sigma(\zeta):=\zeta^\sigma$, $\Re\zeta>0$, $g_\sigma(1)=1$, and $H(z):=(1+z)/(1-z)$. Let $F$ be a q.c.-extension
of~$f_\sigma$ to~$\ComplexE$ with~$F(\infty)=\infty$. Define $\psi:=(2-\sigma)^{-1}h^{-1}\circ (\sigma F)\circ h$, where
$h(w):=H^{-1}(-e^w)$ is a conformal mapping of $\Pi:={\{w\colon |\Im w|<\pi/2\}}$ onto $\ComplexE\setminus\overline\UD$. Then
$\psi$ is a q.c.-automorphism of~$\Pi$  continuously extendible to $\partial \Pi$ with
\begin{equation}\label{EQ_boundary-conditions}
\psi\big(x\pm \frac{i\pi}2\big)=\frac{\sigma x}{2-\sigma}\pm \frac{i\pi} 2\quad\text{ for all~$x\in\Real$.}
 \end{equation}
 Conversely, any q.c.-automorphism of~$\Pi$ satisfying~\eqref{EQ_boundary-conditions} defines a q.c.-extension~$F$ of~$f$
to~$\ComplexE$, with $\mu_F(z)=\mu_\psi(h^{-1}(z))(z^2-1)/(\overline z\,^2-1)$, $|z|>1$.
 Let $k:=\esssup|\mu_\psi|$ be the maximal dilatation of~$\psi$. To estimate $k$ from below, consider the $k$-q.c. automorphism
of~$\UD$ defined by $\varphi:=H^{-1}\circ\mathrm{exp}\circ\psi\circ \mathrm{log}\circ H$, where $\log$ stands for the branch of
the logarithm that maps the right half-plane onto~$\Pi$. Note that $\varphi$ and $\varphi^{-1}$, as well as their homeomorphic
extensions to~$\overline\UD$, must be ${(1-k)/(1+k)}$-H\"older continuous; see, e.g. \cite[p.\,30]{Ahlfors:2006}. It follows that
\begin{multline*}
 \inf_{a\in\Real}\sup_{w\in P_a}\frac{\Re\psi(w)}{\Re w}\le K,\quad\sup_{a\in\Real}\inf_{w\in P_a}\frac{\Re\psi(w)}{\Re w}\ge
\frac 1K,\\\text{where}\quad P_a:=\big\{w\colon \Re w\ge a,~|\Im w|\le\frac\pi2\big\}.
\end{multline*}
Using~\eqref{EQ_boundary-conditions}, we get $k\ge|\sigma-1|$. Equality occurs when $\psi=\psi_0$, $\psi_0(x+iy)=\sigma
x/(2-\sigma)+iy$ for all~$x\in\Real$, $y\in(-\pi/2,\pi/2)$. Moreover,  $\psi_0$ is the only extremal q.c.-map in this case; see, e.g.
\cite[Example~1.4.2 on p.\,85]{Reich:2002}.

Let $F_0$ be the q.c.-extension of $f_\sigma$ corresponding to the automorphism~$\psi_0$. Then $F_0$ is the unique
$|\sigma-1|$-q.c. extension of~$f_\sigma$ to~$\ComplexE$. A simple computation shows that $F_0(\infty)=\infty$ and that
$\mu_{F_0}(z)=(\sigma-1)(z^2-1)/(\overline z\,^2-1)$ for all~$z\in\Complex\setminus\overline\UD$. By
Theorem~\ref{TH_Becker-converse}, $F_0$ is a Becker extension of~$f_\sigma$.
\end{example}
\begin{remark}\label{RM_Teichmuller-maps}
The Becker extensions  $F_0$ and $F_2$ in the above two examples are uniquely extremal Teichm\"uller mappings of
$\ComplexE\setminus\overline\UD$ with infinite and finite norm, respectively; see, e.g. \cite{ReichStrebel} or~\cite{Reich:2002},
for the terminology and related results. The Becker extension $F_1$ is a Teichm\"uller mapping
of~$\Complex\setminus\overline\UD$, but not of~$\ComplexE\setminus\overline\UD$, and it is not extremal without the
condition~$F(\infty)=\infty$.
\end{remark}
 On the one hand, Examples~\ref{EX_extrI} and~\ref{EX_extrII} along with Remark~\ref{RM_Teichmuller-maps} indicate that
$\clS_k^B$ should be an ``important'' part of~$\clS$. In particular, one can construct a series of similar examples, e.g., by
considering ${f_n(z):=\big(f_1(z^n)\big)^{1/n}}={z/(1-kz^n)^{2/n}}$, $n=2,3,\ldots$ The Loewner chain $f^n_t:=e^t f_n$,
$t\ge0$, defines a $k$-q.c. Becker extension $F_n$ of the function~$f_n$, with ${\mu_{F_n}(z)=k\overline{\varphi(z)}/|\varphi(z)|}$
for all~$z\in\Complex\setminus\overline\UD$, where $\varphi(z):=-1/z^{n+2}$. Therefore, $F_n$ is a Teichm\"uller map
of~$\ComplexE\setminus\overline\UD$ with finite norm. It follows~\cite{Strebel} that $F_n$ is the unique q.c.-extension of~$f_n$ for
which the maximal dilatation has the least possible value.  On the other hand, the same idea allows us to construct, in an implicit way,
many functions $f\in\clS_k$ not belonging to~$f\in\clS_k^B$.
\begin{theorem}\label{TH_NO}
For any $k\in(0,1)$, $\clS_k^B\neq\clS_k$. In particular, if $\varphi$ is a holomorphic function in~$\C\setminus\overline\UD$ with
finite norm $\|\varphi\|_1:=\iint_{\Complex\setminus\overline{\UD}}|\varphi(z)|\,\di x\di y$, and if there exists $\rho>1$ such that
$\UC\ni\zeta\mapsto\zeta^{-2}\overline{\varphi(\rho\zeta)}/|\varphi(\rho\zeta)|$ does not admit a holomorphic extension to~$\UD$,
then $F|_\UD\in\clS_k\setminus\clS_k^B$, where $F$ stands for the unique solution to the Beltrami equation $\bar\partial F=\mu\,
\partial F$, $\mu:=k\overline\varphi/|\varphi|$ in $\Complex\setminus\overline\UD$, $\mu\equiv0$ in~$\UD$, normalized by $F(0)=0$,
$F'(0)=1$, $F(\infty)=\infty$.
\end{theorem}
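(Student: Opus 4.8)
The plan is to prove the quantitative ``in particular'' assertion and then deduce $\clS_k^B\neq\clS_k$ by exhibiting one admissible $\varphi$. So let $\varphi$, $\mu:=k\overline\varphi/|\varphi|$ and $F$ be as in the statement. Since $|\mu|=k$ a.e.\ and $\mu\equiv0$ on $\UD$, the normalized solution $F$ of $\bar\partial F=\mu\,\partial F$ is a $k$-q.c.\ automorphism of $\ComplexE$ fixing $\infty$ whose restriction $f:=F|_\UD$ is conformal with $f(0)=f'(0)-1=0$; thus $f\in\clS_k$, and everything reduces to proving $f\notin\clS_k^B$. I would base this on two ingredients: a rigidity statement forcing $F$ to be the only $k$-q.c.\ extension of $f$, and the characterization of Theorem~\ref{TH_Becker-converse} showing that this particular $F$ is not a Becker extension.

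For the rigidity, observe that on $\Omega:=\ComplexE\setminus\overline\UD$ the map $F$ is a Teichm\"uller mapping with holomorphic quadratic differential $\varphi$ of finite norm $\|\varphi\|_1<\infty$. By the finite-norm (frame-mapping) uniqueness for Teichm\"uller maps \cite{Strebel,ReichStrebel,Reich:2002}, applied exactly as for the maps $F_n$ discussed above, $F$ is the unique q.c.-extension of $f$ whose maximal dilatation attains the least possible value, and that value equals $k$. Hence every $k$-q.c.\ extension of $f$ coincides with $F$, and $f$ admits a $k$-q.c.\ Becker extension if and only if $F$ itself is one. It therefore suffices to disprove that $F$ is a Becker extension.

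To this end I would invoke Theorem~\ref{TH_Becker-converse}(I) in the Fourier form of the remark following it: $F$ is a Becker extension precisely when the coefficients $a_n(\rho):=\tfrac1{2\pi}\int_0^{2\pi}e^{-in\theta}\mu_F(\rho e^{i\theta})\,\di\theta$ vanish for a.e.\ $\rho>1$ and all integers $n\le1$. Since $\mu_F(\rho\zeta)=k\,\overline{\varphi(\rho\zeta)}/|\varphi(\rho\zeta)|$ and $k\neq0$, the simultaneous vanishing of these $a_n(\rho)$ is equivalent to $\zeta\mapsto\zeta^{-2}\overline{\varphi(\rho\zeta)}/|\varphi(\rho\zeta)|$ extending holomorphically to $\UD$. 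Thus the hypothesis states exactly that the Becker criterion is violated at a single radius $\rho=\rho^\ast$, i.e.\ $a_{n_0}(\rho^\ast)\neq0$ for some $n_0\le1$.

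The crux, and the step I expect to be the main obstacle, is to upgrade this single violating radius to a violation on a set of positive measure, which is what Becker's ``for a.e.\ $\rho$'' criterion actually requires. I would do this by showing that each $\rho\mapsto a_n(\rho)$ is continuous on $(1,\infty)$: the integrand is bounded in modulus by $k$, and as $\rho\to\rho^\ast$ it converges pointwise for a.e.\ $\theta$, because $\theta\mapsto\varphi(\rho^\ast e^{i\theta})$ is real-analytic and $\not\equiv0$ and hence vanishes at only finitely many $\theta$; Lebesgue's dominated convergence theorem then gives $a_n(\rho)\to a_n(\rho^\ast)$. Together with $a_{n_0}(\rho^\ast)\neq0$, continuity yields $a_{n_0}\neq0$ on an entire neighbourhood of $\rho^\ast$, so the Becker criterion fails on a set of positive measure; hence $F$ is not a Becker extension and $f\notin\clS_k^B$. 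Finally, to obtain $\clS_k^B\neq\clS_k$ for every $k\in(0,1)$, I would verify the hypothesis for a concrete differential, e.g.\ $\varphi(z):=(z-b)^2/z^6$ with $b>1$, which is holomorphic and of finite norm on $\Omega$ and for which a direct computation gives $\zeta^{-2}\overline{\varphi(\rho\zeta)}/|\varphi(\rho\zeta)|=\zeta^3(\rho-b\zeta)/(\rho\zeta-b)$, a function with a genuine pole at $\zeta=b/\rho\in\UD$ for every $\rho>b$, so that no holomorphic extension exists.
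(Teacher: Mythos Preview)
Your proof is correct and follows exactly the paper's two-step strategy: invoke Strebel's uniqueness theorem for finite-norm Teichm\"uller maps (the paper cites \cite[Theorem~4]{Strebel:1978}) to show that $F$ is the only $k$-q.c.\ extension of $f$, and then apply Theorem~\ref{TH_Becker-converse} to rule out that this particular $F$ is a Becker extension. You have usefully made explicit the continuity argument needed to pass from failure of the criterion at a single $\rho$ to failure on a set of positive measure, and supplied a concrete $\varphi$ witnessing $\clS_k^B\neq\clS_k$, both of which the paper's two-line proof leaves to the reader.
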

\begin{proof}
By \cite[Theorem~4]{Strebel:1978}, $F$ is the unique $k$-q.c. extension of~$f:=F|_\UD$, but it is not a Becker extension by
Theorem~\ref{TH_Becker-converse}.
\end{proof}

At the end of this section, let us recall that Theorem~\ref{TH_q.c.-extension-via-lambda-lemma} states also q.c.-extendibility of the
evolution family. Fix $k\in[0,1)$ and denote by $\mathcal U_k$ the union of all evolution families generated by Herglotz vector fields
given by~\eqref{EQ_BP} with $p$ satisfying condition~\eqref{EQ_Becker-weaker}.
\begin{remark}
An interesting fact about $\mathcal U_k$ is that on the one hand, it is closed w.r.t. taking compositions, i.e. $\varphi\circ\psi\in
\mathcal U_k$ for any $\varphi,\psi\in\mathcal U_k$, but on the other hand, each $\varphi\in\mathcal U$ admits a
q.c.-extension~$\Phi$ to~$\ComplexE$ \textit{with the same bound for the maximal dilatation}: $\esssup|\mu_\Phi|\le k$, although
the composition of $k$-q.c. extensions of $\varphi$ and $\psi$, clearly, does not need to be a~$k$-q.c.~map.
\end{remark}

\section{A question concerning the Loewner range}\label{S_L-range}
In order to complete the proof of Theorem~\ref{TH_q.c.-extension-via-lambda-lemma}, we have to show that under its hypothesis,
the Loewner range $L[(f_t)]:=\cup_{t\ge0}f_t(\UD)$ is the whole complex plane.
\begin{theorem}\label{TH_LoewnerRange}
Let $p$ be a Herglotz function such that $p(\UD,t)\subset K$ for a.e.~$t\ge0$ and some compact set $K\subset\UH$. Then for any
measurable  ${\tau\colon [0,+\infty)\to\overline\UD}$, the Loewner chains~$(f_t)$ associated with the Herglotz vector field
$G(z,t):=(\tau(t)-z)(1-\overline{\tau(t)}z)p(z,t)$ satisfy $L[(f_t)]=\Complex$.
\end{theorem}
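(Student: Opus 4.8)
The plan is to reduce the theorem to the growth of a single scalar attached to the evolution family $(\varphi_{s,t})$ and then to estimate it. It suffices to treat the standard chain ($f_0(0)=f_0'(0)-1=0$): by \cite[Theorem~1.6]{CDGloewner-chains} a general associated chain is $\Phi\circ(\text{standard})$ for a fixed conformal $\Phi$, and if the standard range is $\Complex$ then $\Phi$ is entire univalent, hence affine, and maps $\Complex$ onto $\Complex$. Put $\zeta_t:=\varphi_{0,t}(0)$. From $f_0=f_t\circ\varphi_{0,t}$ one gets $f_t(\zeta_t)=0$ and $f_t'(\zeta_t)=1/\varphi_{0,t}'(0)$, so the conformal radius of $\Omega_t:=f_t(\UD)$ at the origin is
\[
r_t:=\mathrm{cr}(\Omega_t,0)=\bigl(1-|\zeta_t|^2\bigr)\,|f_t'(\zeta_t)|=\frac{1-|\varphi_{0,t}(0)|^2}{|\varphi_{0,t}'(0)|}.
\]
Since $\Omega_s\subset\Omega_t$ for $s\le t$ and $0\in\Omega_0$, the number $r_t$ is non-decreasing, and Carath\'eodory kernel convergence gives $r_t\to\mathrm{cr}\bigl(L[(f_t)],0\bigr)$. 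A proper simply connected domain has finite conformal radius at every interior point, so it suffices to prove $r_t\to+\infty$.

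Next I would differentiate $\log r_t$ along the trajectory. The Loewner--Kufarev ODE gives $\dot\zeta_t=G(\zeta_t,t)$ and $\tfrac{d}{dt}\log\varphi_{0,t}'(0)=G'(\zeta_t,t)$ (here $'=\partial_z$; the second-order terms cancel). Writing $G=q\,p$ with $q(z):=(\tau(t)-z)(1-\overline{\tau(t)}z)$, a direct computation shows that $\dfrac{-2\overline z\,q(z)}{1-|z|^2}-q'(z)$ is \emph{real} and equals
\[
B(z,t):=\frac{|1-\overline{\tau(t)}z|^2+|\tau(t)-z|^2}{1-|z|^2}>0 ,
\]
so that
\[
\frac{d}{dt}\log r_t=\Psi(\zeta_t,t),\qquad \Psi(z,t):=B(z,t)\,\Re p(z,t)-\Re\bigl(q(z)\,\partial_z p(z,t)\bigr).
\]
Monotonicity of $r_t$ is exactly the inequality $\Psi\ge0$, and the theorem reduces to $\int_0^{+\infty}\Psi(\zeta_t,t)\,dt=+\infty$.

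The driving force is the compactness $K\Subset\UH$, i.e. the uniform bounds $0<m\le\Re p\le|p|\le M<+\infty$. The mechanism is transparent in the autonomous model, where $p$ is independent of $t$ and $\tau$ is a constant on $\partial\UD$: conjugating by the Cayley map $T(z)=(\tau+z)/(\tau-z)$, which sends the Denjoy--Wolff direction $\tau$ to $\infty$, the trajectory $w_t:=T(\zeta_t)=\widetilde\varphi_{0,t}(1)$ of the associated half-plane family $\widetilde\varphi_{s,t}$ obeys $\dot w=2P(w)$ with $P:=p\circ T^{-1}$, whence $\Re w_t\ge 1+2mt\to+\infty$; moreover $\tfrac{d}{dt}\log P(w_t)=2\,\partial_w P(w_t)=\tfrac{d}{dt}\log\partial_w\widetilde\varphi_{0,t}(1)$ telescopes to $\partial_w\widetilde\varphi_{0,t}(1)=P(w_t)/P(1)$, which stays bounded because $|P|\le M$. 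As $r_t=\Re w_t/|\partial_w\widetilde\varphi_{0,t}(1)|$ in these coordinates, we get $r_t\gtrsim\Re w_t\to+\infty$. For interior $\tau$ the sharp estimate $|\partial_z p|\le 2\Re p/(1-|z|^2)$ together with $\Re p\ge m$ gives the clean pointwise bound $\Psi(z,t)\ge m\,(1-|\tau(t)|^2)^2(1-|z|^2)/16$, which settles the cases in which $\zeta_t$ does not approach $\partial\UD$ too fast.

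The main obstacle is genuine time-dependence. For $p$ only measurable in $t$, and for $\tau=\tau(t)$ varying (which forbids any single half-plane coordinate and makes the pointwise bound degenerate whenever $|\tau(t)|=1$), the telescoping collapses: differentiating $\log P(w_t)$ now produces an extra term $\partial_t p$ that need not exist. This is where I expect the real work to lie. One must estimate $\int_0^T\Re\bigl(q(\zeta_t)\,\partial_z p(\zeta_t,t)\bigr)\,dt$ against $\int_0^T B(\zeta_t,t)\Re p(\zeta_t,t)\,dt$ using only $m\le\Re p$ and $|p|\le M$: these keep $p$ uniformly away from the unbounded, boundary-pole Poisson kernels, which are the only functions making $\Psi$ vanish, so the non-automorphism defect $\Re(q\,\partial_z p)$ cannot fully cancel $B\,\Re p$ over long intervals. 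Showing that this separation forces the defect to integrate subcritically is, I expect, the technical heart of the proof; a mollification of $p$ in $t$ within a fixed compact subset of $\UH$, together with stability of standard chains under convergence of the Herglotz data, should reduce the general case to the autonomous computation above.
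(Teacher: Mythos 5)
Your setup is correct and, modulo a change of coordinates, identical to the paper's: the paper conjugates the evolution family by the M\"obius maps $h_t(w)=(w-a(t))/(1-\overline{a(t)}w)$, $a(t):=\varphi_{0,t}(0)$, and invokes \cite[Theorem~1.6]{CDGloewner-chains} to reduce the theorem to $|\psi_{0,t}'(0)|\to0$; since $\log r_t=-\log|\psi_{0,t}'(0)|$, your $\Psi(\zeta_t,t)$ is exactly the paper's $\Re q(0,t)$, and your algebraic identity for $B(z,t)$ checks out. The genuine gaps are the two estimates you defer, and the route you sketch for the first one would fail. Mollifying $p$ and $\tau$ in $t$ cannot ``reduce the general case to the autonomous computation'': mollification produces continuous data, not time-independent data, and your telescoping identity $\frac{d}{dt}\log P(w_t)=2\partial_wP(w_t)=\frac{d}{dt}\log\partial_w\widetilde\varphi_{0,t}(1)$ requires $\partial_tP\equiv0$, not mere continuity; for non-constant $\tau(t)$ there is moreover no single half-plane coordinate in which $G$ becomes $2P$. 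A piecewise-constant-in-$t$ approximation fails for a different reason: the bound $|\partial_w\widetilde\varphi|\le M/m$ is valid only within one interval of constancy, and these losses multiply across intervals. The missing idea is pointwise in $t$, and it is precisely where the compactness of $K$ enters. Since $p(\cdot,t)$ maps $\UD$ into $K$, and $K$ lies in a disk $D$ with $\overline D\subset\UH$, the Schwarz--Pick lemma for $p:\UD\to D$ composed with the inclusion $D\hookrightarrow\UH$ (which contracts hyperbolic densities by a factor $\nu=\nu(K)<1$ at points of~$K$) gives
\[
|\partial_z p(z,t)|\ \le\ \frac{2\nu\,\Re p(z,t)}{1-|z|^2},\qquad \nu=\nu(K)<1,
\]
for all $z\in\UD$ and a.e.~$t\ge0$. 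With $a:=|1-\overline{\tau(t)}z|$ and $b:=|\tau(t)-z|$ this turns your formula into
\[
\Psi(z,t)\ \ge\ \frac{\Re p(z,t)}{1-|z|^2}\,\bigl(a^2+b^2-2\nu ab\bigr)\ \ge\ (1-\nu)\,\frac{\Re p(z,t)\,a^2}{1-|z|^2}\ \ge\ \frac{(1-\nu)\,m}{4}\,\bigl(1-|z|^2\bigr),
\]
uniformly in $\tau\in\overline\UD$, in particular when $|\tau(t)|=1$ and for arbitrary measurable $t$-dependence. This is the paper's estimate $|p_1'(0,t)|\le2\nu\Re p_1(0,t)$. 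Your intuition that the hypothesis keeps $p$ ``uniformly away from the Poisson kernels'' is exactly right, but its correct quantitative form is this strict constant $\nu<1$ in Schwarz--Pick --- a pointwise statement, not an integrated (``subcritical'') one; time-dependence is not the obstacle at all.

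Second, even granting the pointwise bound, you still must prove $\int_0^{+\infty}\bigl(1-|\zeta_t|^2\bigr)\,dt=+\infty$, which you acknowledge (``settles the cases in which $\zeta_t$ does not approach $\partial\UD$ too fast'') but never address; it is not automatic, since $|\zeta_t|\to1$ does occur, e.g.\ whenever $|\tau|\equiv1$. The paper closes this with a second pointwise estimate: from the identity $\overline{\zeta}\,(\tau-\zeta)(1-\overline\tau\zeta)=(1-|\zeta|^2)(1-\overline\tau\zeta)-|1-\overline\tau\zeta|^2$ one gets, with $x:=|1-\overline{\tau(t)}\zeta_t|$,
\[
\tfrac12\,\tfrac{d}{dt}|\zeta_t|^2\ =\ \Re\bigl[\overline{\zeta_t}\,G(\zeta_t,t)\bigr]\ \le\ x\,|p|\,(1-|\zeta_t|^2)-x^2\,\Re p\ \le\ \frac{|p|^2}{4\Re p}\,\bigl(1-|\zeta_t|^2\bigr)^2\ \le\ C_K\bigl(1-|\zeta_t|^2\bigr)^2,
\]
where $C_K:=\max_{w\in K}|w|^2/(4\Re w)$. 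Hence $(1-|\zeta_t|^2)^{-1}\le1+2C_Kt$, so $1-|\zeta_t|^2\ge(1+2C_Kt)^{-1}$ and the integral diverges. Note that this step uses the upper bound $|p|\le M$, i.e.\ again the compactness of~$K$, in an essential way: bounds on $\Re p$ alone do not suffice, as the paper's Example~\ref{EX_essential} shows. Your autonomous model, where $\Re w_t\ge1+2mt$ comes for free, hides exactly these two difficulties.
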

Before giving the proof of above theorem, let us place some remarks. If $\tau$ is a constant function with the value in~$\UD$, then it
is a simple exercise to show that $L[(f_t)]=\Complex$ if and only if $\int_0^{\infty}\Re p(0,t)\,\di t=+\infty$. In case of
constant~$\tau$ with the value on~$\UC$, a sufficient condition is that
\begin{equation}\label{EQ_sufficient-chordal}
C_1< \Re p(z,t) < C_2\quad\text{for all~$z\in\UD$ and a.e.~$t\ge0$}
\end{equation}
with some positive constants $C_1$ and $C_2$, see \cite[Proposition~3.7]{HottaGum::QC-chordal}. However,
Example~\ref{EX_essential} given after the proof of Theorem~\ref{TH_LoewnerRange}  shows that for arbitrary measurable functions
$\tau$, condition~\eqref{EQ_sufficient-chordal} does not imply that $L[(f_t)]=\Complex$.

\medskip\noindent{\bf Proof of Theorem~\ref{TH_LoewnerRange}.} Denote by $(\varphi_{s,t})$ the evolution family associated with
the Herglotz vector field~$G$. Let $\psi_{s,t}:=h_t\circ\varphi_{s,t}\circ h_s^{-1}$, where
$$
 h_t(w):=\frac{w-a(t)}{1-\overline{a(t)}w},\quad
a(t):=\varphi_{0,t}(0);\quad w\in\UD,~t\ge s\ge0.
$$
By \cite[Lemma~2.8]{CDGloewner-chains}, $(\psi_{s,t})$ is an evolution family. Thanks to \cite[Theorem~1.6]{CDGloewner-chains},
it is sufficient to show that $|\psi_{0,t}'(0)|\to0$ as $t\to+\infty$.  Denote by $G_0$ the Herglotz vector field of~$(\psi_{s,t})$. Since
$\psi_{s,t}(0)=0$ whenever $0\le s\le t$, we have $G_0(z,t)=-z q(z,t)$ for all~$z\in\UD$ and a.e.~$t\ge0$, where $q$ is a Herglotz
function. From~\eqref{EQ_genLK-ODE} we find that $\Re q(0,t)=$
\begin{equation*}
=-\Re G_0'(0,t)=\frac{1-|a(t)|^2}{\big|1+\overline{a(t)}\kappa(t)\big|^2}\,\,\Re\!\Big[(1+|\kappa(t)|^2)
p_1(0,t)-\kappa(t)p_1'(0,t)\Big],
\end{equation*}
for a.e.~$t\ge0$, where $p_1(z,t):=p(h_t^{-1}(z),t)$ and $\kappa(t):=h_t(\tau(t))$. Using the fact that holomorphic maps are
non-expansive w.r.t. the hyperbolic metric,  in the same way as in the proof of \cite[Proposition~3.7]{HottaGum::QC-chordal} we see
that $|p_1'(0,t)|\le 2\nu\Re p_1(0,t)$ for a.e.~$t\ge0$ and some constant~$\nu\in(0,1)$ depending only on the compact set~$K$.
Therefore, for a.e. $t\ge0$,
\begin{equation*}\label{EQ_RE-est}
\Re\!\Big[(1+|\kappa(t)|^2) p_1(0,t)-\kappa(t)p_1'(0,t)\Big]\ge (1-\nu)(1+|\kappa(t)|^2) \Re p_1(0,t).
\end{equation*}

To show that  $\lim\limits_{t\to+\infty}\log|\psi_{0,t}'(0)|=-\int_0^{+\infty}\Re q(0,t)\,\di t \
  =-\infty$, it remains to notice that
\begin{eqnarray*}
\displaystyle \frac{\di|a(t)|^2\!/\di t}{2}=\Re\!\Big[\overline{a(t)}\,G(a(t),t)\Big]\!\!&\le\,
 \displaystyle x\,\big|p(a(t),t)\big|\,\big(1-|a(t)|^2\big)-x^2\,\Re  p(a(t),t)\\
&\le\displaystyle\frac{\big|p(a(t),t)\big|^2}{4\Re p(a(t),t)}\,\big(1-|a(t)|^2\big)^2~~\text{~for a.e. $t\ge0$},
\end{eqnarray*}
where $x:=\big|1-\overline{\tau(t)}\,a(t)\big|$, and hence $\int_0^{+\infty}(1-|a(t)|^2)\,\di t=+\infty$. \proofbox

\begin{example}\label{EX_essential}
Let $\rho:[0,+\infty)\to[0,1)$ be a locally absolutely continuous function with $\rho(t)=0$ if and only if~$t=0$ and such that
$\int_0^1\!\di t/\rho(t)<+\infty$. Consider the evolution family $(\varphi_{s,t})$ associated with the Herglotz vector field~$G$ given
by~\eqref{EQ_BP} with
\begin{eqnarray*}
 p(z,t):=1-i\rho'(t)\frac{(1+\rho(t)^2)}{(1-\rho(t)^2)^2}~\text{~and~}~\tau(t):=&\displaystyle
  ie^{i\theta(t)}\frac{(1-i\rho(t))^2}{1+\rho(t)^2},\\[1ex]
\text{where~}~\theta(t):=&\displaystyle\int_0^t\!\frac{(1-\rho(s)^2)^2}{1+\rho(s)^2}\,\frac{\di s}{\rho(s)},
\end{eqnarray*}
for all~$z\in\UD$ and $t\ge0$. It is easy to check that~$a(t):=\varphi_{0,t}(0)=\rho(t)e^{i\theta(t)}$.

Calculations in the proof of Theorem~\ref{TH_LoewnerRange} show that $\psi_{0,t}'(0)\to0$ as ${t\to+\infty}$ if and only if
${\int_0^{+\infty}(1-\rho(t)^2)\,\di t=+\infty}$. It follows that the hypothesis of \hbox{Theorem~\ref{TH_LoewnerRange}} cannot
be replaced by the weaker condition~\eqref{EQ_sufficient-chordal}.
\end{example}

\bibliographystyle{plain}

\end{document}